\documentclass[11pt]{amsart}
\usepackage{amsmath}
\usepackage{amsthm}
\usepackage{amscd}
\usepackage{amssymb}
\usepackage{amsfonts}
\usepackage{graphics}
\usepackage{color}
\usepackage{comment}
\usepackage[dvipsnames]{xcolor}
\usepackage[all,cmtip]{xy}
\usepackage[normalem]{ulem}
\date{}

\newlength{\defbaselineskip }
\setlength{\defbaselineskip}{\baselineskip}

 \long\def\salta#1{\relax}

 \theoremstyle{plain}
\newtheorem{theorem}{Theorem}[section]

\newtheorem{lemma}[theorem]{Lemma}

\newtheorem{corollary}[theorem]{Corollary}

\theoremstyle{definition}

\newtheorem{remark}[theorem]{Remark}

\newcommand{\R}{\mathbb{R}}
\newcommand{\N}{\mathbb{N}}
\newcommand{\Sm}{\mathbb{S}}

\addtolength{\hoffset}{-0.2cm}
\addtolength{\textwidth}{0.4cm}

\numberwithin{equation}{section}

\title[An inequality by Del Pino-Dolbeault and the log-MT inequality]{
On the equivalence between an Onofri-type inequality by Del Pino-Dolbeault and the sharp logarithmic Moser-Trudinger inequality}
\author[N. Borgia]{Natalino Borgia}
\address{Dipartimento di  Matematica  \\ Universit\`{a} degli Studi di Bari Aldo Moro \\ Via Orabona 4\\ 70125 Bari, Italy}
\email{natalino.borgia@uniba.it}
\author[S. Cingolani]{Silvia Cingolani}
\address{Dipartimento di  Matematica  \\ Universit\`{a} degli Studi di Bari Aldo Moro \\ Via Orabona 4\\ 70125 Bari, Italy}
\email{silvia.cingolani@uniba.it}
\author[G. Mancini]{Gabriele Mancini}
\address{Dipartimento di  Matematica  \\ Universit\`{a} degli Studi di Bari Aldo Moro \\ Via Orabona 4\\ 70125 Bari, Italy}
\email{gabriele.mancini@uniba.it}

\begin{document}            

\begin{abstract}
In this paper we consider the $N$-dimensional Euclidean Onofri inequality proved by del Pino and Dolbeault in \cite{DD3} for smooth compactly supported functions in $\R^N$, $N \geq 2$.
We extend the inequality to a suitable weighted Sobolev space, although no clear connection with standard Sobolev spaces on $\mathbb{S}^N$ through stereographic projection is present, except for the planar case.
Moreover, in any dimension $N \geq 2$, we show that the Euclidean Onofri inequality is equivalent to  the logarithmic Moser-Trudinger inequality with sharp constant proved by Carleson and Chang in \cite{CC} for balls in $\R^N$. 
\end{abstract}

\keywords{$N-$Euclidean Onofri inequality, logarithmic Moser-Trudinger inequality, sharp constants,  weighted Sobolev spaces, higher dimensions, density}

\subjclass[2000]{26D10, 46E30, 46E35, 58E35, 35J92}

\date{\today}

\maketitle

\section{Introduction}
 In this paper we discuss connections between Onofri-type inequalities on the Euclidean space and the logarithmic Moser-Trudinger inequality on balls in $\R^N$.  Let $N \ge 2$ be an integer and let $\omega_{N-1}$ and $V_N$ denote respectively the $N-1$ dimensional measure of the unit sphere and the volume of the unit ball in $\R^N$. In \cite{DD3} del Pino and Dolbeault obtained the following $N$-dimensional Euclidean Onofri inequality for any $u \in C_0^{\infty}(\mathbb{R}^N)$:
\begin{equation}\label{EuclideanOnofriNdimensional}
\displaystyle  \ln \left( \int_{\mathbb{R}^N} e^u \, d\mu_N \right) \leq  \frac{1}{\widetilde{\omega_N}} \int_{\mathbb{R}^N} H_N(u,\mu_N) \, dx + \int_{\mathbb{R}^N} u \, d\mu_N,
\end{equation}
where
\begin{equation}\label{Not1}
\widetilde{\omega_N}:= N^N \left( \frac{N}{N-1} \right)^{N-1} \omega_{N-1},
 \end{equation}
\begin{equation}\label{Not2} \mu_N(x):= \frac{1}{V_N \left( 1+ |x|^{\frac{N}{N-1}} \right)^N}, \quad  d \mu_N(x):=\mu_N(x) \, dx,
\end{equation}
and
\begin{equation}\label{Not3} H_N(u,\mu_N) :=| \nabla v_N + \nabla u |^N - |\nabla v_N|^N - N |\nabla v_N |^{N-2} \nabla v_N \cdot \nabla u, \end{equation}
with 
\begin{equation}\label{Not}
v_N(x):=\ln \, \mu_N(x).
\end{equation}

They were able to obtain \eqref{EuclideanOnofriNdimensional}  by considering the endpoint of a family of optimal Gagliardo–Nirenberg interpolation inequalities, discovered in \cite{DD1} and extended in \cite{DD2}.
We point out that the Euclidean Onofri inequality \eqref{EuclideanOnofriNdimensional} was obtained with different techniques also by Agueh, Boroushaki, and Ghoussoub in \cite{ABG}, for any 
$u \in W^{1,N}(\mathbb{R}^N)$. We also refer to \cite{LL} for an improved version of \eqref{EuclideanOnofriNdimensional} involving singular weights (see also \cite{DEJ,DET,DLL}).  

In dimension $N=2$, \eqref{EuclideanOnofriNdimensional} becomes
\begin{equation}\label{EuclideanOnofri2D}
\displaystyle  \ln \left(\int_{\mathbb{R}^2} e^u \, d \mu_2 \right) \leq  \frac{1}{16 \pi} \int_{\mathbb{R}^2} | \nabla u |^2 \, dx + \int_{\mathbb{R}^2} u \, d \mu_2
\end{equation}
for any $u \in C_0^{\infty}(\mathbb{R}^N)$. Actually, the 2-dimensional Euclidean Onofri inequality \eqref{EuclideanOnofri2D} holds for any function $u \in L^1(\mathbb{R}^2, d \mu_2)$ such that $ |\nabla u| \in L^2(\mathbb{R}^2, dx)$. Indeed, by using the stereographic projection from $\mathbb{S}^2$ onto $\mathbb{R}^2$ with respect to the north pole, \eqref{EuclideanOnofri2D} is equivalent to the following classical Onofri inequality
\begin{equation}\label{LOMTS2}
 \ln \left( \frac{1}{4 \pi} \int_{\mathbb{S}^2} e^u \, dv_{g_0} \right) \leq  \frac{1}{16 \pi} \int_{\mathbb{S}^2} | \nabla_{g_0} u |_{g_0}^2 \, dv_{g_0} + \frac{1}{4 \pi} \int_{\mathbb{S}^2}u \, dv_{g_0},
\end{equation}
for any $\displaystyle u \in H^{1}(\mathbb{S}^2,g_0)$, where $g_0$ is the standard metric on $\mathbb{S}^2$. Inequality \eqref{LOMTS2} plays an important role in spectral analysis for the Laplace-Beltrami operator thanks  to Polyakov's formula (see \cite{OPS,OPS2,Poly1,Poly2}) and it was proved in 1982 by Onofri in \cite{O}  using conformal invariance and an earlier result by Aubin  \cite{Aubin}. 

Inequality \eqref{LOMTS2} is also strictly related to the Moser-Trudinger inequality \cite{M,poho,Tru,Yudo}. Specifically, in \cite[Theorem 2]{M} Moser proved via symmetrization techniques that there exists a constant $S>1$ such that 
\begin{equation}\label{MTS2}
\frac{1}{4\pi}\int_{\Sm^2} e^{4\pi \left( \frac{u - \frac{1}{4\pi}\int_{\Sm^2} u \, dv_{g_0}}{\int_{\Sm^2}|\nabla_{g_0}u|_{g_0}^2\,dv_{g_0}}\right)^2}\,dv_{g_0}\le S,
\end{equation}
for any $u\in H^1(\Sm^2,g_0)$. As a straightforward consequence of \eqref{MTS2} and Young's inequality, one can prove that
\begin{equation}\label{NOptOnofri}
 \ln \left( \frac{1}{4 \pi} \int_{\mathbb{S}^2} e^u \, dv_{g_0} \right) \leq  \frac{1}{16 \pi} \int_{\mathbb{S}^2} | \nabla_{g_0} u |_{g_0}^2 \, dv_{g_0} + \frac{1}{4 \pi} \int_{\mathbb{S}^2}u \, dv_{g_0}+\ln S,
\end{equation}
which is a non-optimal version of \eqref{LOMTS2} due to the constant $\ln S>0$.  For this reason \eqref{LOMTS2} is sometimes called Moser-Onofri or Moser-Trudinger-Onofri inequality in the literature. 

 An extension of \eqref{LOMTS2} to $\mathbb{S}^N$ in higher dimensions was stated in \cite{Bek,carlenloss}, however no clear connection with \eqref{EuclideanOnofriNdimensional} through stereographic projection is present.
Taking into account this, we want to extend \eqref{EuclideanOnofriNdimensional} to a suitable weighted Sobolev space. More precisely, we will consider the space

\begin{align*}
\displaystyle 
W_{\mu_N}(\mathbb{R}^N):= \{ u \in L^1(\mathbb{R}^N,d\mu_N): \quad  & |\nabla u| \in L^N(\mathbb{R}^N,dx),\\                        & |\nabla u|^2 |\nabla v_N|^{N-2} \in L^1(\mathbb{R}^N,dx) \}, 
\end{align*}

\noindent endowed with the norm
\begin{equation}\label{norm}
\lVert u \rVert_{\mu_N}:= \int_{\mathbb{R}^N} |u| \, d\mu_N + \lVert \nabla u \rVert_N + \left( \int_{\mathbb{R}^N}  |\nabla u|^2 |\nabla v_N|^{N-2} \, dx \right)^{\frac{1}{2}}.
\end{equation}

We observe that, if $N=2$, $W_{\mu_2}(\mathbb{R}^2)$ coincides with the set of functions $u \in L^1(\mathbb{R}^2, d \mu_2)$ such that $| \nabla u| \in L^2(\mathbb{R}^2, dx)$, in which \eqref{EuclideanOnofri2D} holds.

To extend \eqref{EuclideanOnofriNdimensional} to $W_{\mu_N}(\mathbb{R}^N)$, we show that smooth compactly supported functions of $\mathbb{R}^N$ are dense in $W_{\mu_N}(\mathbb{R}^N)$ with respect to the norm   \eqref{norm}. 

\begin{theorem}\label{densita} Assume $N \ge 2$. Then 
$$ \displaystyle W_{\mu_N}(\mathbb{R}^N)=\overline{C_0^{\infty} (\mathbb{R}^N)}^{\lVert \cdot \rVert_{\mu_N}}.$$
\end{theorem}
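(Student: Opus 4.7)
The strategy is the classical three-step approximation: truncation in values, cutoff in space, and mollification.

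\emph{Step 1 (Truncation).} Given $u \in W_{\mu_N}(\R^N)$, set $T_k(s):=\max\{-k,\min\{s,k\}\}$. Since $T_k$ is $1$-Lipschitz, the chain rule for Sobolev functions gives $\nabla T_k(u)=\chi_{\{|u|<k\}}\nabla u$; combined with $|T_k(u)|\le|u|$ and $|\nabla T_k(u)|\le|\nabla u|$, dominated convergence applied to each of the three pieces of $\|\cdot\|_{\mu_N}$ yields $T_k(u)\to u$ in $W_{\mu_N}(\R^N)$. Hence we may assume $u\in L^\infty(\R^N)$.

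\emph{Step 2 (Cutoff).} This is the main obstacle: by scale invariance of the $L^N$-gradient norm, a standard radial cutoff $\eta(|x|/R)$ only yields $\|\nabla\eta_R\|_N=O(1)$, so even for bounded $u$ the piece $\|u\nabla\eta_R\|_N$ need not vanish. We therefore use a logarithmic-scale cutoff: fix $\psi\in C^\infty(\R)$ with $\psi\equiv 1$ on $(-\infty,1/4]$ and $\psi\equiv 0$ on $[1,\infty)$, and set
\begin{equation*}
\eta_R(x):=\psi\!\left(\frac{\log(1+|x|)}{\log(1+R)}\right),\qquad R>1.
\end{equation*}
Then $\eta_R\in C_0^\infty(\R^N)$, $\eta_R\equiv 1$ near the origin, $\mathrm{supp}\,\eta_R\subset B_R$, and $|\nabla\eta_R(x)|\le C[(1+|x|)\log R]^{-1}$. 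Integration in spherical coordinates yields $\|\nabla\eta_R\|_N^N=O((\log R)^{-(N-1)})$, and, using that $|\nabla v_N(x)|$ behaves like $|x|^{-1}$ at infinity and $|\nabla v_N|^{N-2}$ is globally bounded, one also finds $\int_{\R^N}|\nabla\eta_R|^2|\nabla v_N|^{N-2}\,dx=O((\log R)^{-1})$. Setting $u_R:=u\eta_R$, the terms $\|u\nabla\eta_R\|_N$ and its weighted analogue vanish as $R\to\infty$ thanks to $u\in L^\infty$, while $\int|u|(1-\eta_R)\,d\mu_N$ and the $(1-\eta_R)$-pieces of the gradient norms vanish by dominated convergence. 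Hence $u_R\to u$ in $W_{\mu_N}(\R^N)$.

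\emph{Step 3 (Mollification).} Since $u_R$ is bounded and compactly supported in $B_R$, the standard mollification $\phi_\varepsilon*u_R\in C_0^\infty(\R^N)$ satisfies $\phi_\varepsilon*u_R\to u_R$ in $L^p(B_{R+1})$ for every $p\in[1,\infty)$, and $\phi_\varepsilon*\nabla u_R\to\nabla u_R$ in $L^N(\R^N)$. Since $\mu_N$ and $|\nabla v_N|^{N-2}$ are both bounded on $B_{R+1}$ and $L^N(B_{R+1})\hookrightarrow L^2(B_{R+1})$, each of the three pieces of $\|\cdot\|_{\mu_N}$ converges. A diagonal argument combining the three steps produces, for any $u\in W_{\mu_N}(\R^N)$, a sequence in $C_0^\infty(\R^N)$ converging to $u$ in $\|\cdot\|_{\mu_N}$, which is the desired density.
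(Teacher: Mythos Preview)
Your proof is correct and follows essentially the same three-step strategy as the paper (truncation in values, then a ``slow'' spatial cutoff to handle the scale-invariance of $\|\nabla\cdot\|_N$, then smoothing). The only notable difference is the choice of cutoff: the paper uses the power-type function $\eta_k(x)=|x|^{-1/k}+k^{-1/k}-1$ on $1<|x|<k^*$ (attributed to Tintarev--Fieseler), whereas you use the more standard logarithmic cutoff $\psi(\log(1+|x|)/\log(1+R))$; both yield $\|\nabla\eta\|_N\to 0$ and $\int|\nabla\eta|^2|\nabla v_N|^{N-2}\,dx\to 0$, which is the crux of the argument.
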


Therefore, by using the previous density result, we are able to prove the following Theorem.

\begin{theorem}\label{EuclideanOnofriNdim}
Assume $N \geq 2.$ Then, for any $u \in W_{\mu_N}(\mathbb{R}^N)$, we have 
\begin{equation}\label{OnofriEuclideaEstesa}
\displaystyle  \ln \left( \int_{\mathbb{R}^N} e^u \, d\mu_N \right) \leq  \frac{1}{\widetilde{\omega_N}} \int_{\mathbb{R}^N} H_N(u,\mu_N) \, dx + \int_{\mathbb{R}^N} u \, d\mu_N.
\end{equation}
\end{theorem}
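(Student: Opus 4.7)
The argument I have in mind is a density-and-limits proof built on Theorem~\ref{densita}. Let $u \in W_{\mu_N}(\mathbb{R}^N)$ and, by Theorem~\ref{densita}, fix $(u_n) \subset C_0^\infty(\mathbb{R}^N)$ with $\lVert u_n - u\rVert_{\mu_N} \to 0$. Up to a subsequence (kept without relabeling) one may assume $u_n \to u$ and $\nabla u_n \to \nabla u$ almost everywhere. For each $n$, the smooth-case inequality \eqref{EuclideanOnofriNdimensional} holds, and the plan is to pass to the limit in each of the three terms appearing in it.

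The linear term $\int_{\mathbb{R}^N} u_n\,d\mu_N$ converges to $\int_{\mathbb{R}^N} u\,d\mu_N$ by the $L^1(d\mu_N)$ contribution of the norm \eqref{norm}. For the exponential term, pointwise convergence $e^{u_n}\to e^u$ together with Fatou's lemma and the monotonicity of $\ln$ yields
\begin{equation*}
\ln\int_{\mathbb{R}^N} e^u\,d\mu_N \leq \liminf_{n\to\infty}\ln\int_{\mathbb{R}^N}e^{u_n}\,d\mu_N,
\end{equation*}
which is precisely the direction needed: an upper bound on the right-hand side of \eqref{EuclideanOnofriNdimensional} applied to $u_n$ will transfer correctly to $u$.

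The crux is the continuity of $u\mapsto \int_{\mathbb{R}^N} H_N(u,\mu_N)\,dx$ along the chosen sequence. I would invoke the standard two-sided algebraic inequality
\begin{equation*}
c_1\bigl(|\eta|^2|\xi|^{N-2} + |\eta|^N\bigr)\leq |\xi+\eta|^N - |\xi|^N - N|\xi|^{N-2}\xi\cdot\eta \leq c_2\bigl(|\eta|^2|\xi|^{N-2} + |\eta|^N\bigr),
\end{equation*}
valid for all $\xi,\eta\in\mathbb{R}^N$ and $N\geq 2$ with $0<c_1\le c_2$ depending only on $N$ (a classical strong-convexity estimate for $t\mapsto|t|^N$). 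Specialized to $\xi=\nabla v_N$ and $\eta=\nabla u_n$, it bounds $H_N(u_n,\mu_N)$ pointwise between $c_1 G_n^{\flat}$ and $c_2 G_n^{\flat}$, where $G_n^{\flat} := |\nabla u_n|^2|\nabla v_N|^{N-2} + |\nabla u_n|^N$. The $W_{\mu_N}$-convergence forces $G_n^{\flat}\to G^{\flat} := |\nabla u|^2|\nabla v_N|^{N-2} + |\nabla u|^N$ in $L^1(\mathbb{R}^N)$: $\nabla u_n\to \nabla u$ in $L^N$ gives $|\nabla u_n|^N\to |\nabla u|^N$ in $L^1$ (via Br\'ezis--Lieb or, directly, convergence of the $L^N$-norm plus a.e.\ convergence), and the third piece of \eqref{norm} gives $|\nabla v_N|^{(N-2)/2}\nabla u_n\to |\nabla v_N|^{(N-2)/2}\nabla u$ in $L^2$, hence $|\nabla u_n|^2|\nabla v_N|^{N-2}\to |\nabla u|^2|\nabla v_N|^{N-2}$ in $L^1$. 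Combining the pointwise a.e.\ convergence $H_N(u_n,\mu_N)\to H_N(u,\mu_N)$ with the variable-dominant form of the dominated convergence theorem (if $f_n\to f$ a.e., $|f_n|\le g_n$, and $g_n\to g$ in $L^1$, then $\int f_n\to \int f$) yields $\int H_N(u_n,\mu_N)\,dx \to \int H_N(u,\mu_N)\,dx$.

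Stringing the three limits together gives \eqref{OnofriEuclideaEstesa} for $u$. The main obstacle is precisely the convergence of the $H_N$-integral: it rests on the elementary two-sided bound above and on the observation that the weighted norm \eqref{norm} has been tailor-made to dominate exactly the two pieces $|\nabla u|^N$ and $|\nabla u|^2|\nabla v_N|^{N-2}$ appearing on the right-hand side of that bound; the behavior of $\nabla v_N$ at the origin and at infinity plays no further role in the limiting argument, but only in ensuring that \eqref{norm} is a well-defined norm.
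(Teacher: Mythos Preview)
Your argument is correct and mirrors the paper's proof: density via Theorem~\ref{densita}, $L^1(d\mu_N)$-convergence for the linear term, Fatou's lemma for the exponential term, and dominated convergence for $\int H_N(u_n,\mu_N)\,dx$ using the upper bound of Lemma~\ref{stima}. The only difference is cosmetic: the paper extracts, along a subsequence, a single dominant $w$ with $w\in L^N$ and $w^2|\nabla v_N|^{N-2}\in L^1$ and applies the classical dominated convergence theorem, whereas you use the generalized (variable-dominant) version with $c_2 G_n^\flat$; note that only the upper half of your two-sided inequality is actually needed, and that upper half is exactly Lemma~\ref{stima}.
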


\medskip
Now, let us recall the Logarithmic Moser-Trudinger inequality on domains of $\mathbb{R}^N$.  Let $\Omega \subset \mathbb{R}^N$ be a bounded domain. Similarly to \eqref{NOptOnofri}, by means of the Moser-Trudinger inequality for the Sobolev space $W^{1,N}_0(\Omega)$ \cite[Theorem 1]{M}, it is simple to prove that there exists a constant $c=c(N)$ such that 
\begin{equation}\label{LMTRnOmega} 
\ln \left( \frac{1}{|\Omega|} \int_{\Omega} e^u \, dx \right) \leq  \frac{1}{\widetilde{\omega_N}} \int_{\Omega} | \nabla u |^N \, dx + \ln c,
\end{equation}
for any $ \displaystyle u \in W^{1,N}_0(\Omega)$.

\medskip
In \cite{CC}, Carleson and Chang improved inequality \eqref{LMTRnOmega} when $\Omega$ is the unit ball $B_1$ of $\R^N$.  Indeed they prove that for any $u\in W^{1,N}_0(B_1)$: 
\begin{equation}\label{CarlesonChang}
\displaystyle \ln \left( \frac{1}{V_N} \int_{B_1} e^u \, dx \right) <  \frac{1}{\widetilde{\omega_N}} \int_{B_1} | \nabla u |^N \, dx + \sum_{k=1}^{N-1} \frac{1}{k}.
\end{equation}
Actually, by \eqref{CarlesonChang} one can easily show that for any open ball $B \subset \mathbb{R}^N$, and for any $u  \in W^{1,N}_0(B)$, we have  
\begin{equation*}
\displaystyle   \ln \left( \frac{1}{|B|} \int_{B} e^u \, dx \right) <  \frac{1}{\widetilde{\omega_N}} \int_{B} | \nabla u |^N \, dx + \sum_{k=1}^{N-1} \frac{1}{k}.
\end{equation*}

\noindent
Inequality \eqref{CarlesonChang} is sharp, in the sense that the functional $J:W^{1,N}_{0}(B_1) \to \mathbb{R}$ defined as 
\begin{equation}\label{Jfunctional}
\displaystyle J(u) := \frac{1}{\widetilde{\omega_N}} \int_{B_1} | \nabla u |^N \, dx  - \ln \left( \frac{1}{V_N} \int_{B_1} e^u \, dx \right),
\end{equation}
satisfies 
$$
\inf_{W^{1,N}_{0}(B_1)} J = - \sum_{k=1}^{N-1} \frac{1}{k}.
$$

Our goal is to establish a natural connection between the sharp inequality \eqref{CarlesonChang} and the inequality \eqref{OnofriEuclideaEstesa}. 
We can consider the functional $I:W_{\mu_N}(\R^N) \to \mathbb{R}$ defined as
\begin{equation}\label{Ifunctional}
I(u):=   \frac{1}{\widetilde{\omega_N}} \int_{\mathbb{R}^N} H_N(u,\mu_N) \, dx + \int_{\mathbb{R}^N} u \, d\mu_N- \ln \left(\int_{\mathbb{R}^N} e^u \, d\mu_N \right). 
\end{equation}

In our main result,  without using \eqref{EuclideanOnofriNdimensional} or \eqref{CarlesonChang}, we prove the following identity between $\displaystyle{\inf_{W^{1,N}_0(B_1)} I}$ and $\displaystyle{\inf_{W_{\mu_N}(\R^N)} J}$.

\begin{theorem}\label{equivalence} For any $N\ge 2$, we have 
$$
\inf_{W_{\mu_N}(\R^N)} I = \inf_{W^{1,N}_0(B_1)}  J +\sum_{k=1}^{N-1}\frac{1}{k}.
$$
\end{theorem}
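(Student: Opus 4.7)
\emph{Strategy.} The plan is to link the two variational problems through a radial measure-preserving bijection from $B_1$ to $\R^N$, combined with an integration by parts that exploits the $N$-Laplace equation solved by $v_N$. By Theorem~\ref{densita} and the density of $C_0^\infty(B_1)$ in $W^{1,N}_0(B_1)$, both infima are computed on smooth compactly supported test functions. A Schwarz-type rearrangement (Pólya--Szegő for $J$; a weight-adapted symmetric decreasing rearrangement for $I$, using the radial monotone character of $\mu_N$ and $|\nabla v_N|$) further restricts the minimization to radial nonincreasing competitors.

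\emph{Simplification of $I$ and change of variables.} A direct calculation shows that $-\Delta_N v_N = \tfrac{\widetilde{\omega_N}}{N}\,\mu_N$. Integrating by parts the linear term of $H_N(u,\mu_N)$ on a large ball containing the support of $u$ then yields, for every $u\in C_0^\infty(\R^N)$,
\[
I(u) = \frac{1}{\widetilde{\omega_N}}\int_{\R^N}\bigl[|\nabla(v_N+u)|^N - |\nabla v_N|^N\bigr]\,dx - \ln\!\int_{\R^N}\! e^u\,d\mu_N,
\]
where the indicated difference is finite even though $|\nabla v_N|^N$ is not integrable on $\R^N$ by itself. Define the radial bijection $\Phi\colon B_1\to\R^N$, $\Phi(x) = s(|x|)\hat x$, with $s\colon[0,1)\to[0,\infty)$ uniquely determined by $\mu_N(B_{s(r)}(0)) = r^N$, equivalently $s(r)^{N/(N-1)}/(1+s(r)^{N/(N-1)}) = r^{N/(N-1)}$. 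Then $\Phi_*(V_N^{-1}\,dx) = d\mu_N$, so that $\bar u := u\circ\Phi$ satisfies $\int e^u\,d\mu_N = V_N^{-1}\int_{B_1}e^{\bar u}\,dx$ and the logarithmic terms in $I(u)$ and $J(\bar u)$ coincide. Consequently
\[
I(u) - J(\bar u) = R(\bar u) := \frac{1}{\widetilde{\omega_N}}\Bigl(\int_{\R^N}\!\bigl[|\nabla(v_N+u)|^N - |\nabla v_N|^N\bigr]dx - \int_{B_1}\!|\nabla \bar u|^N\,dx\Bigr),
\]
and after substituting the explicit radial profile $P(r) := v_N(s(r)) = -\ln V_N + N\ln(1-r^{N/(N-1)})$ and using the identity $s(r)^{N-1}/s'(r)^{N-1} = r^{N-1}(1-r^{N/(N-1)})^{N-1}$, the remainder $R(\bar u)$ becomes a concrete one-dimensional integral depending only on $\bar u'(r)$, $r$ and $N$.

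\emph{Identification of the harmonic constant.} It remains to prove
\[
\inf_{\bar u\in W^{1,N}_0(B_1)}\bigl[J(\bar u)+R(\bar u)\bigr] = \inf_{\bar u\in W^{1,N}_0(B_1)} J(\bar u) + \sum_{k=1}^{N-1}\frac{1}{k}.
\]
For the upper bound $\leq$, take a Moser-type concentrating sequence $\bar u_n\in W^{1,N}_0(B_1)$ that is asymptotically minimizing for $J$ (constructed explicitly, without invoking \eqref{CarlesonChang}). Inserting $\bar u_n$ in the one-dimensional formula for $R$ and performing an iterated integration by parts in $r$ produces the telescoping limit $R(\bar u_n)\to\sum_{k=1}^{N-1}1/k$. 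For the lower bound $\geq$, apply the convex inequality for $t\mapsto|t|^N$ expanded around the reference vector $\nabla v_N$ (equivalently, around $P'$ after change of variables) and compare with $J(\bar u)$; a careful balance of the cross and quadratic remainders gives $R(\bar u)\geq \sum_{k=1}^{N-1}1/k - \bigl(J(\bar u) - \inf J\bigr) + o(1)$ along suitable approximating sequences. The main obstacle is precisely the asymptotic identification of $\sum_{k=1}^{N-1}1/k$ from the intricate one-dimensional integral, which mirrors, within the Euclidean-Onofri framework, the combinatorial structure that yields the same harmonic number in the Carleson--Chang proof.
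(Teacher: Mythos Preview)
Your approach via a measure-preserving radial bijection $\Phi:B_1\to\R^N$ is genuinely different from the paper's, but as written it has two real gaps.

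\textbf{Symmetrization for $I$.} You invoke a ``weight-adapted symmetric decreasing rearrangement'' to reduce $\inf I$ to radial competitors, which you need because your change of variables only yields a tractable one-dimensional integral when $u$ is radial. But the integrand $H_N(u,\mu_N)=R_N(\nabla v_N,\nabla u)$ depends on the full vector $\nabla u$ through the cross term $|\nabla v_N|^{N-2}\nabla v_N\cdot\nabla u$, not merely on $|\nabla u|$. Standard P\'olya--Szeg\H{o} applies to integrands of the form $A(|\nabla u|)$; here you would have to prove a genuinely new rearrangement inequality, and you give no argument for it. Without this, you cannot restrict to radial functions and the bijection approach stalls.

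\textbf{The lower bound is not proved.} After the change of variables you need $J(\bar u)+R(\bar u)\ge \inf J+\sum_{k=1}^{N-1}\tfrac{1}{k}$ for every competitor $\bar u$. Your justification is the sentence ``a careful balance of the cross and quadratic remainders gives $R(\bar u)\ge \sum_{k=1}^{N-1}\tfrac{1}{k}-(J(\bar u)-\inf J)+o(1)$,'' followed by the admission that ``the main obstacle is precisely the asymptotic identification of $\sum_{k=1}^{N-1}1/k$.'' That is not a proof: the inequality you want is exactly a reformulation of the target statement, and convexity of $t\mapsto|t|^N$ around $\nabla v_N$ only yields $H_N\ge 0$, which is far weaker. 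You have relocated the difficulty, not resolved it.

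By contrast, the paper avoids both issues. It never symmetrizes. For ``$\ge$'' it takes $u\in C_0^\infty(\R^N)$, sets $w=u\Psi_r+v_N-v_N(r)\in W^{1,N}_0(B_r)$, expands $|\nabla w|^N$ via $H_N$, and sends $r\to\infty$. For ``$\le$'' it takes $u\in W^{1,N}_0(B_1)$ and tests $I$ on $u_r(x)=u(x/r)-v_N(x)+v_N(r)$. In both directions the harmonic number appears transparently from the single explicit computation
\[
\frac{1}{\widetilde{\omega_N}}\int_{B_r}|\nabla v_N|^N\,dx=\frac{N}{N-1}\ln r-\sum_{k=1}^{N-1}\frac{1}{k}+o(1),
\]
established by an elementary induction on the binomial expansion of $t^{N-1}/(1+t)^N$. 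This direct limiting construction, rather than a fixed bijection, is what makes the argument close.
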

Theorem \ref{equivalence} shows that the sharp inequalities \eqref{OnofriEuclideaEstesa} and \eqref{CarlesonChang} are equivalent. In particular, we can give a simple alternative proof of \eqref{OnofriEuclideaEstesa}. On the one hand, it follows  from \eqref{CarlesonChang} that, for any $u\in W^{1,N}_0(B_1)$:  
$$
J(u) > -\sum_{k=1}^{N-1} \frac{1}{k} = \inf_{W^{1,N}_0(B_1)} J.
$$
On the other hand, since $I(0)=0$, Theorem \ref{equivalence} yields
$$
0 = \inf_{W_{\mu_N}(\R^N)} I = \min_{W_{\mu_N} (\R^N)} I.  
$$

\noindent

 Theorem \ref{equivalence} is interesting even in the simplest case $N=2$. Indeed, as  announced in \cite{BCM} (see also \cite{IM}),  it shows that Onofri's inequality \eqref{LOMTS2} on $\Sm^2$ is equivalent to the sharp Logarithmic Moser-Trudinger inequality \eqref{CarlesonChang} by Carleson and Chang. 
 
 As discussed above  the inequalities with non-optimal constants \eqref{NOptOnofri} and \eqref{LMTRnOmega}  are  direct corollaries of the Moser-Trudinger inequality for the  sphere  \cite[Theorem 2]{M} and its analogue for bounded domains \cite[Theorem 1]{M}. In \cite{M}, Moser proved these theorems indipendently,
although their proofs are based on similar arguments relying on symmetrization techniques. He explicitly states that it seems impossible to deduce one Theorem from the other. Although the equivalence between Moser's theorems  is still an open problem, here we have proved  the equivalence between the corresponding  logarithmic inequalities with sharp constants \eqref{CarlesonChang} (with $N=2$) and  \eqref{LOMTS2}. 

\section{Preliminaries and function spaces}

\medskip
In this section we establish some preliminary estimates and discuss the optimal weighted Sobolev  space in which the $N$-Euclidean Onofri inequality holds. Throughout the paper we let $\widetilde \omega_N, \mu_N, d\mu_N$ and $v_N$ be defined as in \eqref{Not1}, \eqref{Not2}, \eqref{Not3} and \eqref{Not}.
A straightforward computation in polar coordinates shows that $d\mu_N$ is a probability measure on $\mathbb{R}^N$, namely
\begin{align*}
\displaystyle
     \int_{\mathbb{R}^N} \, d \mu_N =  1.
\end{align*}
Moreover, we have
$$ \displaystyle \nabla v_N(x)=-\frac{N^2}{N-1} \, \frac{|x|^{\frac{1}{N-1}}}{1+|x|^{\frac{N}{N-1}} } \,\frac{x}{|x|},$$
and
\begin{equation}\label{GaussGreen}
 \displaystyle  \Delta_N v_N(x) =- N^N \left( \frac{N}{N-1} \right)^{N-1} V_N \mu_N(x),
\end{equation}
where $\Delta_N$ denotes the  $N$-Laplacian operator, defined by

$$ \Delta_N u := \text{div}\left( | \nabla u |^{N-2} \nabla u  \right).$$

\medskip
\noindent
Let us denote 
$$ \displaystyle R_N(X,Y):=|X+Y|^N - |X|^N  -N|X|^{N-2}X \cdot Y \qquad (X,Y) \in \mathbb{R}^N \times \mathbb{R}^N,$$

\noindent and 
\medskip
$$
\begin{aligned} 
\displaystyle 
&    H_N(u(x),\mu_N(x)) := R_N(\nabla v_N(x), \nabla u(x)) \\
&  =  | \nabla v_N(x) + \nabla u(x) |^N - |\nabla v_N(x) |^N - N |\nabla v_N(x) |^{N-2} \nabla v_N(x) \cdot \nabla u(x).
\end{aligned}
$$

\medskip
\begin{lemma}\label{stima} For any $N\in \N$, $N\ge 2$, there exists a constant $c_N$ such that
$$
0\le R_N(X,Y) \le c_N (|Y|^N + |Y|^2 |X|^{N-2}), \quad \forall \, X,Y \in \R^N.
$$
\end{lemma}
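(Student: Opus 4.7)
\emph{Proof plan.} The plan is to establish the lower bound by convexity of $|\cdot|^N$, and the upper bound by splitting into the two cases $|Y|\ge |X|$ and $|Y|<|X|$.

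For the lower bound I would observe that, since $N\ge 2$, the map $\phi(Z):=|Z|^N$ is convex on $\R^N$ with gradient $\nabla\phi(Z)=N|Z|^{N-2}Z$. The quantity $R_N(X,Y)=\phi(X+Y)-\phi(X)-\nabla\phi(X)\cdot Y$ is the Bregman divergence associated to $\phi$, hence nonnegative.

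For the upper bound, if $|Y|\ge |X|$ the triangle inequality gives $|X+Y|^N\le 2^N|Y|^N$ and $|X|^N\le |Y|^N$, while Cauchy--Schwarz gives $N|X|^{N-2}X\cdot Y\le N|Y|^N$; summing these yields $R_N(X,Y)\le (2^N+N+1)|Y|^N$. If instead $|Y|<|X|$, then $X+tY\ne 0$ for every $t\in[0,1]$, so the function $g(t):=|X+tY|^N$ is $C^2$ on $[0,1]$, and a direct computation gives
\[
g''(t)=N(N-2)|X+tY|^{N-4}\bigl((X+tY)\cdot Y\bigr)^2+N|X+tY|^{N-2}|Y|^2.
\]
Combining $|(X+tY)\cdot Y|\le |X+tY||Y|$ with the elementary bound $|X+tY|\le 2|X|$ one obtains $|g''(t)|\le N(N-1)\,2^{N-2}|X|^{N-2}|Y|^2$. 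Taylor's formula with integral remainder, $R_N(X,Y)=g(1)-g(0)-g'(0)=\int_0^1(1-t)g''(t)\,dt$, then yields $R_N(X,Y)\le \tilde C_N\,|X|^{N-2}|Y|^2$. Taking $c_N$ to be the maximum of the two constants obtained in the two regimes gives the claim.

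The main technical concern is that for $2\le N<4$ the map $\phi$ fails to be $C^2$ at the origin, so Taylor's formula cannot be applied naively on the whole of $\R^N$. This is precisely why I separate the case $|Y|<|X|$, in which the segment $\{X+tY:t\in[0,1]\}$ is bounded away from $0$ and $g$ is genuinely $C^2$ on $[0,1]$, from the case $|Y|\ge|X|$, where the Taylor argument is replaced by the crude triangle inequality; this split makes the proof work uniformly for every integer $N\ge 2$.
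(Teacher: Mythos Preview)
Your proof is correct. Both arguments rest on the Taylor expansion of $t\mapsto |X+tY|^N$, but they are organized differently. The paper observes that for $N\ge 3$ the one-variable function $f(t)=|tY+X|^N$ is $C^2$ on all of $\R$, with $f''(t)=0$ at any point where $tY+X=0$; hence a single application of Taylor with Lagrange remainder gives both the lower and the upper bound at once, and the case $N=2$ is handled separately as $R_2(X,Y)=|Y|^2$. You instead get the lower bound from convexity (Bregman divergence) and, for the upper bound, split into $|Y|\ge|X|$ (crude triangle inequality) versus $|Y|<|X|$ (Taylor on a segment avoiding the origin). Your split is a safe way to sidestep the regularity issue you flag, but it is in fact unnecessary: the obstruction is that $Z\mapsto|Z|^N$ is not $C^2$ on $\R^N$ for $N<4$, yet the \emph{one-dimensional} restriction $f$ is still twice differentiable, which is all Taylor needs. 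The paper's route is therefore slightly shorter, while yours is more cautious and works uniformly in $N\ge 2$ without a separate check at $N=2$.
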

\begin{proof}
If $N=2$, $R_2(X,Y) = |Y|^2$ does not depend on $X$ and the thesis is trivial.\\
Let $N \geq 3$ and let $X,Y \in \mathbb{R}^N.$ We consider the $C^2$ function $f:\mathbb{R} \to \mathbb{R}$ given by 
$$f(t)= \left|tY + X  \right|^N.$$

\noindent\medskip
By  a direct computation, we have
$$ \displaystyle f'(t)= N \left|tY + X  \right|^{N-2} ( tY + X) \cdot Y, $$

and
$$ \displaystyle
f''(t)=N(N-2) \left|tY + X  \right|^{N-4} \left( ( tY + X) \cdot Y \right)^2 + N \left|tY + X  \right|^{N-2}|Y|^2
$$
if $tY + X  \neq 0$, and $f''(t)=0$ otherwise.\\
Now we observe that, if $t \in [0,1]$, we have
\begin{equation}\label{fsecondo}
\displaystyle
\begin{split}
       0 \leq f''(t) 
& \leq   2c_N \left( |Y|^N + |Y|^2  |X|^{N-2} \right), \\
\end{split}
\end{equation}

\noindent where $c_N=N(N-1)2^{N-4}.$ By Taylor's theorem with Lagrange remainder, there exists $\overline{t} \in\, ]0,1[$ such that
$$ \displaystyle \frac{1}{2} f''(\overline{t})=f(1)-f(0)-f'(0)=\left|Y + X  \right|^N  - \left| X  \right|^N - N \left|X  \right|^{N-2} X \cdot Y .$$

\noindent
Taking into account \eqref{fsecondo}, we deduce that

$$ \displaystyle 0 \leq R_N(X,Y) \leq  c_N \left( |Y|^N + |Y|^2  |X|^{N-2} \right).$$
\end{proof}

\noindent
We define

\begin{align*}
\displaystyle 
W_{\mu_N}(\mathbb{R}^N):= \{ u \in L^1(\mathbb{R}^N,d\mu_N): \quad  & |\nabla u| \in L^N(\mathbb{R}^N,dx),\\                        & |\nabla u|^2 |\nabla v_N|^{N-2} \in L^1(\mathbb{R}^N,dx) \}. 
\end{align*}

\noindent
In this space we introduce the following norm:
\begin{equation*}
\lVert u \rVert_{\mu_N}:= \int_{\mathbb{R}^N} |u| \, d\mu_N + \lVert \nabla u \rVert_N + \left( \int_{\mathbb{R}^N}  |\nabla u|^2 |\nabla v_N|^{N-2} \, dx \right)^{\frac{1}{2}}  .
\end{equation*}

An immediate consequence of Lemma \ref{stima} is the following integrability condition for $H_N$.

\begin{corollary}
Let $ u \in W_{\mu_N}(\mathbb{R}^N).$ Then 
$$ \displaystyle \int_{\mathbb{R}^N} H_N(u,\mu_N) \, dx < + \infty.$$
\end{corollary}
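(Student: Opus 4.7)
The plan is essentially to apply Lemma \ref{stima} pointwise and then integrate. Since $H_N(u,\mu_N)(x) = R_N(\nabla v_N(x),\nabla u(x))$ by definition, taking $X = \nabla v_N(x)$ and $Y = \nabla u(x)$ in the lemma gives the pointwise estimate
$$
0 \le H_N(u,\mu_N)(x) \le c_N\bigl(|\nabla u(x)|^N + |\nabla u(x)|^2\,|\nabla v_N(x)|^{N-2}\bigr)
$$
valid for almost every $x \in \mathbb{R}^N$. In particular, the integrand is nonnegative, so integrability over $\mathbb{R}^N$ is equivalent to finiteness of the integral.

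Next I would integrate this inequality over $\mathbb{R}^N$ and bound each of the two resulting terms using the defining conditions of $W_{\mu_N}(\mathbb{R}^N)$: the condition $|\nabla u| \in L^N(\mathbb{R}^N,dx)$ makes $\int_{\mathbb{R}^N}|\nabla u|^N\,dx$ finite, and the condition $|\nabla u|^2|\nabla v_N|^{N-2} \in L^1(\mathbb{R}^N,dx)$ makes the second term finite. Hence
$$
\int_{\mathbb{R}^N} H_N(u,\mu_N)\,dx \le c_N \lVert \nabla u\rVert_N^N + c_N\int_{\mathbb{R}^N}|\nabla u|^2\,|\nabla v_N|^{N-2}\,dx < +\infty,
$$
which is the claim.

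There is no real obstacle here: the corollary is designed so that the two function-space conditions built into the norm $\lVert \cdot \rVert_{\mu_N}$ match exactly the two terms appearing on the right-hand side of Lemma \ref{stima}. The only thing worth double-checking is measurability of $H_N(u,\mu_N)$ as a function of $x$ (immediate, since $\nabla v_N$ is explicit and $\nabla u$ is measurable by assumption) and the fact that the lemma applies almost everywhere even on the set where $\nabla v_N(x) = 0$ or $\nabla u(x) = 0$, which is fine because the bound in Lemma \ref{stima} is stated for all $X,Y \in \mathbb{R}^N$.
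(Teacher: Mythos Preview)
Your proof is correct and is exactly the argument the paper has in mind: the paper does not even write out a proof, stating only that the corollary is ``an immediate consequence of Lemma \ref{stima}'', and what you have written is precisely that immediate consequence spelled out in detail.
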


\medskip
\noindent
It is interesting to compare $W_{\mu_N}$ with $W^{1,N}$. 

\begin{lemma}\label{W1nmun}
If $\displaystyle u \in  W^{1,N}(\mathbb{R}^N)$ is such that $|\nabla u|^2 |\nabla v_N|^{N-2} \in L^1(\mathbb{R}^N,dx)$, then $u \in W_{\mu_N}(\mathbb{R}^N).$
Moreover, we have $ W_{\mu_N}(\mathbb{R}^N) \setminus W^{1,N}(\mathbb{R}^N) \neq \emptyset$. 
\end{lemma}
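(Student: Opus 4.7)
The plan for Lemma \ref{W1nmun} has two clearly separated parts, corresponding to the two assertions in the statement.

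For the inclusion $W^{1,N}(\mathbb{R}^N)\cap\{u : |\nabla u|^2|\nabla v_N|^{N-2}\in L^1\}\subset W_{\mu_N}(\mathbb{R}^N)$, only the integrability $u \in L^1(\mathbb{R}^N,d\mu_N)$ needs to be verified, since the two conditions on $\nabla u$ in the definition of $W_{\mu_N}$ are either part of the $W^{1,N}$ hypothesis or the extra assumption. The natural way to get $u \in L^1(d\mu_N)$ from $u \in L^N(\mathbb{R}^N)$ is Hölder's inequality with exponents $N$ and $N'=N/(N-1)$:
$$
\int_{\mathbb{R}^N} |u|\,d\mu_N \;\le\; \|u\|_{L^N(\mathbb{R}^N)}\,\|\mu_N\|_{L^{N/(N-1)}(\mathbb{R}^N)}.
$$
Thus the single nontrivial point is $\mu_N \in L^{N/(N-1)}(\mathbb{R}^N)$. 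This is immediate: $\mu_N$ is bounded (since $\mu_N(x)\le 1/V_N$) and belongs to $L^1(\mathbb{R}^N)$ because $d\mu_N$ is a probability measure, hence $\mu_N\in L^p(\mathbb{R}^N)$ for every $p\in[1,\infty]$ by interpolation. One could also check it directly in polar coordinates, where the integrand at infinity decays like $r^{N-1-N^3/(N-1)^2}$, which is integrable for $N\ge 2$.

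For the second assertion, I need to exhibit a function in $W_{\mu_N}(\mathbb{R}^N)\setminus W^{1,N}(\mathbb{R}^N)$. The simplest choice is a nonzero constant, e.g.\ $u\equiv 1$. Since $\nabla u\equiv 0$, the conditions $|\nabla u|\in L^N(\mathbb{R}^N,dx)$ and $|\nabla u|^2|\nabla v_N|^{N-2}\in L^1(\mathbb{R}^N,dx)$ are trivially satisfied, and $\int_{\mathbb{R}^N}|u|\,d\mu_N=\int_{\mathbb{R}^N}d\mu_N=1<\infty$, so $u\in W_{\mu_N}(\mathbb{R}^N)$. On the other hand, $u\equiv 1\notin L^N(\mathbb{R}^N)$, so $u\notin W^{1,N}(\mathbb{R}^N)$.

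I do not expect any real obstacle here: the only computational content is the $L^{N/(N-1)}$-integrability of $\mu_N$, which follows from the boundedness and the fast polynomial decay of $\mu_N$ at infinity. The value of the lemma is structural—it makes clear that $W_{\mu_N}(\mathbb{R}^N)$ is strictly larger than $W^{1,N}(\mathbb{R}^N)$ because $\mu_N$ is finite and so constants (and more generally slowly growing functions) can be accommodated.
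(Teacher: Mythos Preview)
Your proposal is correct and follows essentially the same route as the paper: H\"older with exponents $N$ and $N/(N-1)$ to get $u\in L^1(d\mu_N)$ from $u\in L^N(\R^N)$, together with the example $u\equiv 1$ for the strict inclusion. The only cosmetic difference is that you justify $\mu_N\in L^{N/(N-1)}$ by interpolation between $L^1$ and $L^\infty$, whereas the paper writes the integral $\gamma_N=\int_{\R^N}(1+|x|^{N/(N-1)})^{-N^2/(N-1)}\,dx$ and asserts its finiteness directly.
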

\begin{proof} \text{}

\medskip
\noindent
Let $\displaystyle u \in  W^{1,N}(\mathbb{R}^N)$ such that $|\nabla u|^2 |\nabla v_N|^{N-2} \in L^1(\mathbb{R}^N,dx)$.
\begin{align*}
\displaystyle 
        \int_{\mathbb{R}^N} |u(x)| \, d \mu_N  =     \int_{\mathbb{R}^N} |u(x)|  \frac{1}{V_N \left( 1+ |x|^{\frac{N}{N-1}} \right)^N}  \, dx.
\end{align*}
By assumption $\lVert u \rVert_N < + \infty$, while
\begin{align*}
\displaystyle
    \gamma_N:=\int_{\mathbb{R}^N} \frac{1}{\left( 1+ |x|^{\frac{N}{N-1}} \right)^{\frac{N^2}{N-1}}}  \, dx < + \infty.
\end{align*}
By H\"older inequality we have
$$ \displaystyle \int_{\mathbb{R}^N} |u(x)| \, d \mu_N  \leq \frac{\gamma_N^{\frac{N-1}{N}}}{V_N} \lVert u \rVert_N  < + \infty.$$
Therefore $u \in W_{\mu_N}(\mathbb{R}^N).$\\
Finally, the constant $u \equiv 1$ is such that $u \in W_{\mu_N}(\mathbb{R}^N) \setminus W^{1,N}(\mathbb{R}^N)$.
\end{proof}

{ The next lemma shows that the condition $|\nabla u|^2 |\nabla v_N|^{N-2}\in L^1(\R^N,dx)$ is necessary for the integrability  of  $H(u,\mu_N)$,  at least in even dimension. 
\begin{lemma}\label{HBelow}
If $N\ge 4$ is even, then 
$$
H(u,\mu_N) \ge \frac{N}{2}|\nabla u|^{2}|\nabla v_N|^{N-2}.
$$
\end{lemma}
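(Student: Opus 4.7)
Since the asserted inequality is pointwise, it suffices to prove the algebraic bound
\[
R_N(X,Y) \ge \tfrac{N}{2}\,|X|^{N-2}|Y|^{2} \qquad \forall\, X, Y \in \R^N,
\]
and then specialise to $X=\nabla v_N(x)$, $Y=\nabla u(x)$. Recall from the preliminaries that $R_N(X,Y) = |X+Y|^N - |X|^N - N|X|^{N-2}X\cdot Y$ and that $H_N(u,\mu_N) = R_N(\nabla v_N,\nabla u)$, so the claim follows immediately once the pointwise bound is established.

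The key observation is that $|X+Y|^N = \phi\bigl(|X|^2 + 2X\cdot Y + |Y|^2\bigr)$, where $\phi(t):=t^{N/2}$. For every $N\ge 2$ the function $\phi$ is convex on $[0,+\infty)$, since $\phi''(t) = \tfrac{N}{2}\bigl(\tfrac{N}{2}-1\bigr)t^{N/2-2}\ge 0$. Applying the first-order convexity inequality $\phi(t_0+s) \ge \phi(t_0) + \phi'(t_0)\,s$ at $t_0=|X|^2$ with increment $s=2X\cdot Y+|Y|^2$ (admissible because $t_0+s=|X+Y|^2\ge 0$), I obtain
\[
|X+Y|^N \ge |X|^N + \tfrac{N}{2}\,|X|^{N-2}\bigl(2X\cdot Y + |Y|^2\bigr),
\]
and rearranging this inequality yields exactly $R_N(X,Y)\ge \tfrac{N}{2}|X|^{N-2}|Y|^2$. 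The degenerate case $X=0$ is handled separately: for $N>2$ the right-hand side vanishes and $R_N(0,Y)=|Y|^N\ge 0$, while for $N=2$ both sides equal $|Y|^2$.

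There is essentially no obstacle in this argument; it amounts to a single application of convexity of $t\mapsto t^{N/2}$ on the half-line, combined with the identity $|X+Y|^2=|X|^2+2X\cdot Y+|Y|^2$. I remark that the proof does not use the evenness of $N$ at any step, so the bound in the lemma in fact holds for every $N\ge 2$. The restriction to $N\ge 4$ even in the statement is most naturally motivated by the alternative, purely algebraic route via the multinomial expansion of $\bigl(|X|^2+2X\cdot Y+|Y|^2\bigr)^{N/2}$, which requires $N/2$ to be a non-negative integer and in which one must pair odd powers of the indefinite quantity $X\cdot Y$ against non-negative terms through Cauchy--Schwarz $(2X\cdot Y)^2\le 4|X|^2|Y|^2$; the convexity argument however bypasses these combinatorics entirely.
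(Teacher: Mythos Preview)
Your proof is correct, and it is genuinely different from the route taken in the paper. The paper argues combinatorially: writing $|X+Y|^N=\bigl(|X|^2+2X\cdot Y+|Y|^2\bigr)^{N/2}$ with $N/2$ an integer, it appeals to the elementary inequalities $(a+b)^k\ge a^k+ka^{k-1}b+b^k$ (for $k\ge 2$) and $(a+b)^k\ge a^k+ka^{k-1}b+kab^{k-1}+b^k$ (for $k\ge 3$), applied with $a=|X|^2$ and $b=2X\cdot Y+|Y|^2$, and then splits into the cases $N/2$ even and $N/2$ odd to ensure that the leftover powers of $2X\cdot Y+|Y|^2$ are nonnegative. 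This is precisely the multinomial-type argument you allude to in your final remark, and it is the reason the statement in the paper is restricted to even $N\ge 4$.

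Your approach replaces all of this by a single application of the tangent-line inequality for the convex function $t\mapsto t^{N/2}$ on $[0,\infty)$, which is both shorter and, as you note, valid for every real $N\ge 2$. What the paper's method buys is that it stays entirely within polynomial identities and avoids any appeal to calculus or convexity; what your method buys is brevity, the elimination of the parity case split, and the removal of the hypothesis that $N$ be even. Both are complete proofs of the stated lemma.
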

\begin{proof}
The proof is based on the following inequalities, which hold for $a,b \in \R$ and $k\in \N$, with $a\ge 0$ and $a+b\ge0$:
\begin{enumerate}
\item\label{Ineq1} if $k\ge 2$, then $(a+b)^k\ge a^{k}+ k a^{k-1} b + b^k$.  
\item\label{Ineq2}  if $k\ge 3$, then $(a+b)^k\ge a^{k}+k a^{k-1} b + k a b^{k-1} + b^k$. 
\end{enumerate}
Both $(1)$ and $(2)$ can be easily proved by induction on $k$.
Let $X,Y\in \R^N$. If $\frac{N}{2}$ is even, using inequality \eqref{Ineq1} with $k = \frac{N}{2}$, $a = |X|^2$ and $b = 2 X\cdot Y +|Y|^2$ we get that 
$$\begin{aligned}
R_N(X,Y)& = \left(|X|^2+2X\cdot Y +|Y|^2\right)^{\frac{N}{2}}-|X|^N - N |X|^{N-2} X\cdot Y \\
&  \ge \frac{N}{2} |X|^{N-2}|Y|^2 +   \left( 2 X \cdot Y + |Y|^2\right)^\frac{N}{2}. 
\end{aligned}
$$
Since $\frac{N}{2}$ is even, we get 
\begin{equation}\label{RNBelow}
R_N(X,Y)\ge \frac{N}{2}|X|^{N-2} |Y|^2. 
\end{equation}

Similarly, if $\frac{N}{2}$ is odd (hence $N\ge 6$), we can apply inequality \eqref{Ineq2} (with $k, a$ and $b$ as above) to get:
$$\begin{aligned}
R_N(X,Y)&  \ge \frac{N}{2} |X|^{N-2}|Y|^2+   \frac{N}{2}|X|^2 \left( 2 X \cdot Y + |Y|^2\right)^{\frac{N}{2}-1} +   \left( 2 X \cdot Y + |Y|^2\right)^\frac{N}{2}  \\
& =  \frac{N}{2} |X|^{N-2}|Y|^2  +  \left( 2 X \cdot Y + |Y|^2\right)^{\frac{N}{2}-1} \left( \frac{N}{2}|X|^2  + 2 X \cdot Y + |Y|^2\right)
\\
& \ge  \frac{N}{2} |X|^{N-2}|Y|^2  +  \left( 2 X \cdot Y + |Y|^2\right)^{\frac{N}{2}-1} |X+Y|^2.
\end{aligned}
$$
Since $\frac{N}{2}-1$ is even, we get again \eqref{RNBelow}. 
Hence \eqref{RNBelow} holds whenever $\frac{N}{2}$ is integer and the conclusion follows by taking $X = \nabla v_N$ and $Y = \nabla u$.  
\end{proof}
}

\begin{remark}
If $N \geq 3$ and $u \in W^{1,N}(\mathbb{R}^N)$, in general the integrability condition $|\nabla u|^2 |\nabla v_N|^{N-2} \in L^1(\mathbb{R}^N,dx)$ of Lemma \ref{W1nmun} is not satisfied. Indeed, let 
$$ \displaystyle u(x):= \sum_{k=2}^{\infty}  \frac{1}{k \sqrt{\ln k } } w(2(|x| - k)), $$
where
$$ \displaystyle
w(r):=
\begin{cases}
1+r & \text{if } r \in [-1,0[, \\
1-r & \text{if } r \in [0,1], \\
0   & \text{otherwise}.
\end{cases}
$$

{A direct computation shows that  the function $u$  satisfies  
\begin{equation}\label{example}
u \in W^{1,N}(\mathbb{R}^N) \quad \text{ and } \quad |\nabla u|^2 |\nabla v_N|^{N-2} \not\in L^1(\mathbb{R}^N,dx). 
\end{equation} 
In particular, if $N$ is even, Lemma \ref{HBelow} yields
$$
\int_{\R^N} H(u,\mu_N) \,dx = +\infty.  
$$
To prove \eqref{example}  we observe that the functions $x\rightarrow w(2(|x|-k))$ have disjoint support as $k$ varies in $\N$ with $k\ge 2$. Then 
$$ \displaystyle |u(x)|^N= \sum_{k=2}^{\infty}  \frac{1}{k^N (\ln k)^{\frac{N}{2}} }  (w(2(|x| - k)))^N.$$
Integrating in polar coordinates and using that $0\le w\le 1$, we find 
$$
\begin{aligned}
 \int_{\mathbb{R}^N} |u|^N \, dx 
& \leq \omega_{N-1} \sum_{k=2}^{\infty}  \frac{1}{k^N (\ln k)^{\frac{N}{2}} } \int_{k - \frac{1}{2}}^{k + \frac{1}{2}} \rho^{N-1} \, d \rho \\
& = \frac{\omega_{N-1}}{N} \sum_{k=2}^{\infty}  \frac{1}{k^N (\ln k)^{\frac{N}{2}} } \left[ \left(k + \frac{1}{2}   \right)^N - \left(k - \frac{1}{2}   \right)^N  \right] \\
& \leq c_1(N) \sum_{k=2}^{\infty} \frac{1}{k (\ln k)^{\frac{N}{2}} } < + \infty
\end{aligned}
$$
since $\frac{N}{2}>1$. Similarly, we have 
$$ \displaystyle |\nabla u(x)|=2 \sum_{k=2}^{\infty}  \frac{1}{k \sqrt{\ln k } }  | w'(2(|x| - k))|,$$
and 
$$ \displaystyle \int_{\mathbb{R}^N} |\nabla u(x)|^N \, dx =  c_2(N) \sum_{k=2}^{\infty}  \frac{1}{k^N (\ln k)^{\frac{N}{2}} } \int_{k - \frac{1}{2}}^{k + \frac{1}{2}} \rho^{N-1} \, d \rho < + \infty.$$

\noindent
Now, we observe that $|\nabla v_N(x)| \sim \frac{N^2}{N-1} \frac{1}{|x|}$ if $|x| >> 1$, hence, taking $k_0$ sufficiently large, we obtain
\begin{align*} \displaystyle 
                        \int_{\mathbb{R}^N} |\nabla u |^2 |\nabla v_N |^{N-2} \, dx 
& \geq  c_3(N) + c_4(N) \sum_{k=k_0}^{\infty} \frac{1}{k^2 \ln k } \int_{k - \frac{1}{2}}^{k + \frac{1}{2}} \frac{1}{\rho^{N-2}} \rho^{N-1} \, d \rho \\
& \geq                  c_5(N) \left( 1 + \sum_{k=k_0}^{\infty} \frac{1}{k \ln k } \right) =                     + \infty.
\end{align*}
}
\end{remark}

\bigskip
Now, we want to prove the density result of Theorem \ref{densita}.  Let us define  $$\mathcal{X}:= \overline{C_0^{\infty} (\mathbb{R}^N)}^{\lVert \cdot \rVert_{\mu_N}}.$$ Clearly, $ C_0^{\infty} (\mathbb{R}^N) \subset W_{\mu_N}(\mathbb{R}^N)$, so that $\mathcal{X} \subseteq W_{\mu_N}(\mathbb{R}^N).$ Our aim is to prove that the two spaces coincide. 
We will prove this in two steps. Firstly, by using a similar idea of Tintarev and Fieseler in \cite[Remark 2.2]{TF}, we show that bounded functions in $W_{\mu_N}(\mathbb{R}^N)$  belong to $\mathcal{X}.$ 

\begin{lemma}\label{funzionilimitate}

Let $u \in W_{\mu_N}(\mathbb{R}^N) \cap L^{\infty}(\mathbb{R}^N)$. Then $u \in \mathcal{X}.$
\end{lemma}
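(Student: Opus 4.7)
The plan is to approximate $u$ by smooth compactly supported functions in two stages: first via a cutoff that produces bounded compactly supported elements of $W_{\mu_N}(\R^N)$, and then by standard mollification. Since the linear cutoff $\eta(x/k)$ leaves $\|\nabla \eta(\cdot/k)\|_N$ bounded away from zero (critical scaling), I would instead employ the logarithmic cutoff
\begin{equation*}
\eta_k(x) := \begin{cases} 1 & \text{if } |x|\le k,\\ \dfrac{2\ln k -\ln |x|}{\ln k} & \text{if } k\le |x|\le k^2,\\ 0 & \text{if } |x|\ge k^2,\end{cases}
\end{equation*}
in the spirit of Tintarev--Fieseler. A direct computation gives $|\nabla \eta_k(x)| \le \frac{1}{|x|\ln k}$ on the annulus $k\le|x|\le k^2$, yielding $\|\nabla \eta_k\|_N^N \le \omega_{N-1}/(\ln k)^{N-1}\to 0$. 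Since $|\nabla v_N(x)|\le C/|x|$ for $|x|\ge 1$, one also obtains
\begin{equation*}
\int_{\R^N}|\nabla \eta_k|^2 \, |\nabla v_N|^{N-2}\,dx \le \frac{C}{\ln k}\to 0.
\end{equation*}

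Setting $u_k := u\eta_k$, which is bounded and compactly supported in $\overline{B_{k^2}}$, I would then verify $u_k\to u$ with respect to $\|\cdot\|_{\mu_N}$. Writing $\nabla u_k -\nabla u = (\eta_k -1)\nabla u + u \nabla \eta_k$, the first summand tends to zero in $L^N(\R^N,dx)$ and in $L^2(\R^N,|\nabla v_N|^{N-2}dx)$ by dominated convergence (since $u\in W_{\mu_N}(\R^N)$), while the second is controlled by $\|u\|_\infty$ times the vanishing quantities above. The zero-order term $\int_{\R^N}|u_k-u|\,d\mu_N$ goes to zero by dominated convergence, using that $\mu_N$ is a probability measure and $u$ is bounded.

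In the second stage I would mollify: $v_{k,\varepsilon}:=u_k*\rho_\varepsilon \in C_0^\infty(\R^N)$. Convergence $v_{k,\varepsilon}\to u_k$ in the $L^1(d\mu_N)$ and $\|\nabla \cdot\|_N$ parts of the norm is classical, using that $u_k$ is bounded with compact support and that $\mu_N\in L^\infty$. For the weighted piece, the function $|\nabla v_N|^{N-2}$ is continuous on $\R^N$, hence bounded on any fixed compact neighborhood $K'$ of $\mathrm{supp}(u_k)$, giving
\begin{equation*}
\int_{\R^N}|\nabla v_{k,\varepsilon}-\nabla u_k|^2 |\nabla v_N|^{N-2}\,dx \le \sup_{K'}|\nabla v_N|^{N-2}\cdot \|\nabla v_{k,\varepsilon}-\nabla u_k\|_{L^2(\R^N)}^2,
\end{equation*}
which vanishes as $\varepsilon\to 0$ since $\nabla u_k\in L^N$ has compact support, hence lies in $L^2(\R^N,dx)$. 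A diagonal extraction $\varepsilon_k\to 0$ then provides the desired approximation in $C_0^\infty(\R^N)$.

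The main obstacle is the calibration of the cutoff: because the gradient norm lives at the critical Sobolev exponent $N$, linear cutoffs are not sufficient, and the logarithmic choice above is essentially dictated by the need to simultaneously kill both the critical $L^N$-term $\|\nabla \eta_k\|_N$ and the weighted $L^2$-term involving $|\nabla v_N|^{N-2}$; fortunately both estimates degenerate at compatible logarithmic rates.
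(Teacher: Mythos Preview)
Your proof is correct and follows the same two-stage template as the paper (compactly supported approximation via cutoff, then smoothing), but with different technical choices at each stage. For the cutoff, the paper does not use the logarithmic function you chose; instead it employs the power-type cutoff
\[
\eta_k(x)=\frac{1}{\sqrt[k]{|x|}}+\frac{1}{\sqrt[k]{k}}-1 \quad \text{on } 1\le |x|\le k^*,\qquad k^*=\Bigl(1-k^{-1/k}\Bigr)^{-k},
\]
also attributed to Tintarev--Fieseler, which enjoys the same key properties $\|\nabla\eta_k\|_N\to 0$ and $\int|\nabla\eta_k|^2|\nabla v_N|^{N-2}\,dx\to 0$ but requires somewhat heavier bookkeeping. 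Your logarithmic cutoff is the more standard and transparent device, so your claim that the choice is ``essentially dictated'' is slightly too strong---any cutoff beating the critical scaling works. For the second stage, the paper invokes density of $C_0^\infty(B_{k^*})$ in $W_0^{1,N}(B_{k^*})$ and then separately controls the $L^1(d\mu_N)$ and weighted-$L^2$ pieces via H\"older inequalities, whereas your direct mollification argument, exploiting that $|\nabla v_N|^{N-2}$ is bounded on compacta and that $\nabla u_k\in L^N$ with compact support lies in $L^2$, is shorter. Both routes are valid; yours is a bit cleaner.
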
 

\begin{proof}
For any $k \in \mathbb{N},$ $k \geq 2$, let us denote  $$  k^*:= \frac{1}{\left( 1 - \frac{1}{\sqrt[k]{k}} \right)^k}.$$ 
We define
\begin{equation*}
\eta_k(x):=
\begin{cases}
 \displaystyle \frac{1}{\sqrt[k]{k}} & \text{ if } |x| \leq 1, \bigskip \\
\displaystyle  \frac{1}{\sqrt[k]{|x|}} + \frac{1}{\sqrt[k]{k}} - 1 & \text{ if }  1 < |x| \leq k^*, \bigskip \\
               0 & \text{ if }  k^* < |x| .
\end{cases}
\end{equation*}

\medskip Let $u \in W_{\mu_N}(\mathbb{R}^N) \cap L^{\infty}(\mathbb{R}^N)$. We define $u_k:=u \, \eta_k.$
Observe that
%
\begin{equation*}
\nabla u_k(x):=
\begin{cases}
 \displaystyle \frac{1}{\sqrt[k]{k}} \nabla u(x) & \text{ if } |x| < 1, \bigskip \\
\displaystyle   \left( \frac{1}{\sqrt[k]{k}} + \frac{1}{\sqrt[k]{|x|}} - 1 \right)\nabla u(x) -\frac{1}{k} \frac{u(x)}{|x|^{\frac{1}{k}+1}} \frac{x}{|x|} & \text{ if }  1 < |x| < k^*, \bigskip \\
               0 & \text{ if }  k^* < |x| .
\end{cases}
\end{equation*}

\noindent
First of all, since $u \in L^{\infty}(\mathbb{R}^N)$ and $\eta_k(x) \to 1 $ for any $x \in \mathbb{R}^N$ as $k \to + \infty,$  we can apply   dominated convergence theorem to say that 
$$ \displaystyle \int_{\mathbb{R}^N} |u - u_k(x)| \, d \mu_N  \to 0 \quad \text{ as } k \to + \infty.$$
Now we want to prove 
$$ \displaystyle \lVert \nabla ( u - u_k )  \rVert_N \to 0 \quad \text{ as } k \to + \infty.$$
We will denote with $o(1)$ any quantity that goes to $0$ as $k \to + \infty.$

\begin{align*}
\displaystyle 
& \int_{\mathbb{R}^N} | \nabla u - \nabla u_k|^N \, dx \\
& = \int_{|x| < 1} | \nabla u - \nabla u_k|^N \, dx +  \int_{1 < |x| < k^*} | \nabla u - \nabla u_k|^N \, dx + \int_{ |x| > k^*} | \nabla u |^N \, dx.
\end{align*}

\noindent
We have
$$ \displaystyle \int_{|x| < 1} | \nabla u - \nabla u_k|^N \, dx= \left( 1 - \frac{1}{\sqrt[k]{k}}  \right)^N \int_{|x| < 1} | \nabla u|^N \, dx=o(1).$$

\noindent
Since $\lVert \nabla u \rVert_N < + \infty$ and $k^* \to + \infty$ as $k \to + \infty,$ we easily obtain 
$$ \displaystyle \int_{|x| > k^*} | \nabla u - \nabla u_k|^N \, dx=  \int_{|x| > k^*} | \nabla u|^N \, dx=o(1).$$
Since $u \in L^{\infty}(\mathbb{R}^N),$ there exists a constant $M_u$ such that $|u(x)| \leq M_u$ for almost every $ x \in \mathbb{R}^N$.
\begin{align*}
\displaystyle
&       \int_{1 < |x| < k^*} | \nabla u - \nabla u_k|^N \, dx  \\
& =    \int_{1 < |x| < k^*} \left| \left(  2 - \frac{1}{\sqrt[k]{k}} - \frac{1}{\sqrt[k]{|x|}}  \right) \nabla u(x)  + \frac{1}{k} \frac{u(x)}{|x|^{\frac{1}{k}+1}} \frac{x}{|x|}    \right|^N  \, dx \\
& \leq C \int_{1 < |x| < k^* }  \left(  2 - \frac{1}{\sqrt[k]{k}} - \frac{1}{\sqrt[k]{|x|}}  \right)^N  \left| \nabla u(x) \right|^N \, dx \\
& + C \frac{M_u^N}{k^N} \int_{1 < |x| < k^*} \frac{1}{|x|^{\frac{N}{k}+N}} \, dx.
\end{align*}

\noindent
Now we observe that 
$$
\displaystyle
 \frac{M_u^N}{k^N} \int_{ 1 < |x| < k^*} \frac{1}{|x|^{\frac{N}{k}+N}} \, dx < \frac{c_1}{k^N} \int_1^{+ \infty} \frac{1}{\rho^{\frac{N}{k}+1}} \, d \rho = \frac{c_2}{k^{N-1}}=o(1).
$$

\noindent
Moreover, we can apply dominated convergence theorem to say that
$$ \displaystyle \int_{1 < |x| < k^* }  \left(  2 - \frac{1}{\sqrt[k]{k}} - \frac{1}{\sqrt[k]{|x|}}  \right)^N  \left| \nabla u(x) \right|^N \, dx \to 0 \quad \text{ as } k \to + \infty.$$

\noindent
Hence
$$ \displaystyle \lVert \nabla ( u - u_k )  \rVert_N \to 0 \quad \text{ as } k \to + \infty.$$

\noindent
It remains to prove, in the case $N \geq 3$, that
$$ \displaystyle \left( \int_{\mathbb{R}^N}  |\nabla u-  \nabla u_k |^2 |\nabla v_N|^{N-2} \, dx \right)^{\frac{1}{2}} \to 0 \quad \text{ as } k \to + \infty.$$

\noindent
We observe that $\displaystyle | \nabla v_N(x)| \sim \frac{1}{|x|}$ as $|x| \to + \infty,$ by which there exists a constant $ r_N > 1$ such that $ \displaystyle  |\nabla v_N (x)| \leq \frac{2}{|x|} $ if $|x| \geq r_N.$ Since $k \to + \infty$ implies $k^* \to + \infty,$ we can assume from the beginning that $k$ is such that $k^* > r_N$.
Moreover, we recall that $|\nabla v_N|$ is bounded from above.

\begin{align*}
\displaystyle
&       \int_{\mathbb{R}^N}  |\nabla u - \nabla u_k|^2 |\nabla v_N|^{N-2} \, dx \\
& =     \int_{ |x| < 1 }  |\nabla u - \nabla u_k|^2 |\nabla v_N|^{N-2} \, dx +  \int_{ 1 < |x| < r_N }  |\nabla u - \nabla u_k|^2 |\nabla v_N|^{N-2} \, dx \\
& +    \int_{ r_N < |x|  < k^* }  |\nabla u - \nabla u_k|^2 |\nabla v_N|^{N-2} \, dx + \int_{ k^* < |x|  }  |\nabla u|^2 |\nabla v_N|^{N-2} \, dx.
\end{align*}

\noindent
Reasoning in a similar way to the previous case, we obtain 
$$ \displaystyle \int_{ |x| < 1 }  |\nabla u - \nabla u_k|^2 |\nabla v_N|^{N-2} \, dx=o(1),$$
$$ \displaystyle   \int_{1 < |x| < r_N} | \nabla u - \nabla u_k|^2 |\nabla v_N|^{N-2} \, dx  =o(1),$$
and
$$ \displaystyle \int_{ k^* < |x|  }  |\nabla u |^2 |\nabla v_N|^{N-2} \, dx=o(1).$$
Now we observe that
\begin{align*}
\displaystyle
&       \int_{r_N < |x| < k^*} | \nabla u - \nabla u_k|^2 |\nabla v_N|^{N-2} \, dx  \\
& =    \int_{r_N < |x| < k^*} \left| \left(  2 - \frac{1}{\sqrt[k]{k}} - \frac{1}{\sqrt[k]{|x|}}  \right) \nabla u(x)  +\frac{1}{k} \frac{u(x)}{|x|^{\frac{1}{k}+1}} \frac{x}{|x|}    \right|^2 |\nabla v_N|^{N-2} \, dx \\
& \leq C \int_{r_N < |x| < k^*}  \left(  2 - \frac{1}{\sqrt[k]{k}} - \frac{1}{\sqrt[k]{|x|}}  \right)^2  \left| \nabla u(x) \right|^2 |\nabla v_N|^{N-2}  \, dx \\
& + C \frac{1}{k^2} \int_{r_N < |x|} \frac{1}{|x|^{\frac{2}{k}+N}} \, dx . \\
\end{align*}

\noindent
Clearly, 
$$ \displaystyle \frac{1}{k^2} \int_{r_N < |x|} \frac{1}{|x|^{\frac{2}{k}+N}} \, dx=o(1).$$

\noindent
Moreover, taking into account that $ \displaystyle |\nabla u|^2 |\nabla v_N|^{N-2} \in L^1(\mathbb{R}^N,dx),$  we can apply dominated convergence theorem to say that 
$$ \displaystyle  \int_{r_N < |x| < k^*}  \left(  2 - \frac{1}{\sqrt[k]{k}} - \frac{1}{\sqrt[k]{|x|}}  \right)^2  \left| \nabla u(x) \right|^2 |\nabla v_N|^{N-2}  \, dx=o(1).$$

\noindent
Finally, we have 
$$ \displaystyle \left( \int_{\mathbb{R}^N}  |\nabla u-  \nabla u_k |^2 |\nabla v_N|^{N-2} \, dx \right)^{\frac{1}{2}} \to 0 \quad \text{ as } k \to + \infty.$$

\noindent
We have proved that $\{u_k \}_k $ is a sequence of compactly supported functions such that $\lVert u -u_k \rVert_{\mu_N} \to 0$ as $k \to +\infty$.\\
We want to take a sequence $\{\tilde{u}_k \}_k \subset C_0^{\infty}(\mathbb{R}^N)$ such that $\lVert u -\tilde{u}_k \rVert_{\mu_N} \to 0$ as $k \to +\infty$.\\
Let us denote with $B_{k^*}$ the open ball $B_{k^*}(O)$.\\
Since ${u_{k}}_{|_{B_{k^*}}} \in W_0^{1,N}(B_{k^*}),$ there exists $\tilde{u}_k \in C_0^{\infty}(B_{k^*})$ such that 
$$ \displaystyle \left\lVert {u_{k}}_{|_{B_{k^*}}}- \tilde{u}_k  \right\rVert_{L^N(B_{k^*})} + \left\lVert \nabla \left({u_{k}}_{|_{B_{k^*}}}- \tilde{u}_k \right) \right\rVert_{L^N(B_{k^*})} < \frac{1}{k}.$$
We can extend to $0$ in $\mathbb{R}^N \setminus B_{k^*}$  and consider $\tilde{u}_k \in C_0^{\infty}(\mathbb{R}^N)$. Hence
$$ \displaystyle \lVert u_k - \tilde{u}_k  \rVert_N + \lVert \nabla \left(u_k - \tilde{u}_k \right) \rVert_N  < \frac{1}{k}.$$

\noindent
Since
\begin{align*}
\displaystyle
    \gamma_N:=\int_{\mathbb{R}^N} \frac{1}{\left( 1+ |x|^{\frac{N}{N-1}} \right)^{\frac{N^2}{N-1}}}  \, dx < + \infty  ,
\end{align*}

\noindent
we can apply H\"older inequality and say that
\begin{align*}
 \displaystyle 
       \int_{\mathbb{R}^N} |u_k(x) - \tilde{u}_k(x)| \, d \mu_N  < \gamma_N^{\frac{N-1}{N}} \lVert u_{k}- \tilde{u}_k  \rVert_N < \frac{\gamma_N^{\frac{N-1}{N}}}{k}=o(1).
\end{align*}

\noindent
Now let $k$ such that $k^* > r_N.$\\
Since
\begin{align*}
\displaystyle 
    \int_{0 < |x| \leq k^*} |\nabla v_N|^N \, dx \leq C \left( \ln k^* + 1   \right),
\end{align*}

we have
\begin{align*}
\displaystyle  \int_{\mathbb{R}^N}  |\nabla(u_k - \tilde{u}_k)|^2 |\nabla v_N|^{N-2} \, dx  <    C \frac{ \left( \ln k^* + 1    \right)^{\frac{N-2}{N}} }{k^2}
\end{align*}

\noindent
Now we observe that, as $k \to + \infty$
$$ \displaystyle \left( \ln k^* + 1 \right)^{\frac{N-2}{N}} \sim k^{\frac{N-2}{N}} \left(- \ln \left(  1 - \frac{1}{\sqrt[k]{k}} \right)  \right)^{\frac{N-2}{N}}, $$

by which
$$ \displaystyle \frac{  \left( \ln k^* + 1   \right)   ^{\frac{N-2}{N}} }{k^2} \sim \frac{\left(- \ln \left(  1 - \frac{1}{\sqrt[k]{k}} \right)  \right)^{\frac{N-2}{N}}}{k^{1 + \frac{2}{N}}} \to 0.$$

Hence
$$ \displaystyle \int_{\mathbb{R}^N}  |\nabla(u_k - \tilde{u}_k)|^2 |\nabla v_N|^{N-2} \, dx \to 0.$$

\noindent
Finally, we have

\begin{align*}
\displaystyle
  \lVert u - \tilde{u}_k \rVert_{\mu_N} \leq  \lVert u -  u_k \rVert_{\mu_N} +  \lVert u_k - \tilde{u}_k \rVert_{\mu_N} \to 0 \quad \text{as } k \to + \infty.  
\end{align*}

\end{proof}

\noindent
Thanks to previous lemma, it remains to prove that any function $u \in W_{\mu_N}(\mathbb{R}^N)$ can be approximated by a sequence of bounded functions in $ W_{\mu_N}(\mathbb{R}^N)$.

\begin{lemma}\label{convergenzatroncate}\

\medskip
\noindent
Let $u \in W_{\mu_N}(\mathbb{R}^N)$. There exists a sequence $\{u_k\}_{k \in \mathbb{N}} \subset W_{\mu_N}(\mathbb{R}^N) \cap L^{\infty}(\mathbb{R}^N) $ such that $\lVert u-u_{k} \rVert_{\mu_N} \to 0$ as $k \to + \infty$. 
\end{lemma}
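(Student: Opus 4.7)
The natural approach is truncation at level $k$. Define
$$
u_k(x) := T_k(u(x)) := \max\{-k,\min\{k,u(x)\}\},
$$
so that $u_k \in L^\infty(\R^N)$ with $\|u_k\|_\infty\le k$. By the standard truncation lemma for Sobolev functions (applicable since $u\in L^1(d\mu_N)$ and $|\nabla u|\in L^N(dx)$ imply $u\in W^{1,1}_{\mathrm{loc}}(\R^N)$), the weak gradient of $u_k$ exists and satisfies
$$
\nabla u_k = \nabla u\,\chi_{\{|u|<k\}}\quad\text{a.e.\ in }\R^N.
$$
Consequently $|\nabla u_k|\le |\nabla u|$ pointwise, so $|\nabla u_k|\in L^N(dx)$ and $|\nabla u_k|^2|\nabla v_N|^{N-2}\in L^1(dx)$. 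Since $d\mu_N$ is a probability measure and $u_k$ is bounded, $u_k\in L^1(\R^N,d\mu_N)$, so $u_k \in W_{\mu_N}(\R^N)\cap L^\infty(\R^N)$, as required.

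The remaining task is to show $\|u-u_k\|_{\mu_N}\to 0$ as $k\to +\infty$, by handling each of the three summands in the norm separately. Since $u\in L^1(\R^N,d\mu_N)$, $u$ is finite almost everywhere, so $\chi_{\{|u|>k\}}\to 0$ a.e.\ as $k\to+\infty$. For the first term, observe
$$
|u-u_k|=(|u|-k)_+\le |u|\,\chi_{\{|u|>k\}},
$$
so dominated convergence (with majorant $|u|\in L^1(d\mu_N)$) gives $\int_{\R^N}|u-u_k|\,d\mu_N\to 0$. For the second term, the identity $\nabla(u-u_k)=\nabla u\,\chi_{\{|u|\ge k\}}$ yields
$$
\int_{\R^N}|\nabla(u-u_k)|^N\,dx=\int_{\R^N}|\nabla u|^N\,\chi_{\{|u|\ge k\}}\,dx,
$$
which tends to $0$ by dominated convergence with majorant $|\nabla u|^N\in L^1(dx)$. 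The same argument applied with majorant $|\nabla u|^2|\nabla v_N|^{N-2}\in L^1(dx)$ handles the third term:
$$
\int_{\R^N}|\nabla(u-u_k)|^2|\nabla v_N|^{N-2}\,dx=\int_{\R^N}|\nabla u|^2|\nabla v_N|^{N-2}\,\chi_{\{|u|\ge k\}}\,dx\longrightarrow 0.
$$
Summing the three estimates yields $\|u-u_k\|_{\mu_N}\to 0$, concluding the proof.

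No real obstacle is expected here: the only delicate point is invoking the truncation rule $\nabla T_k(u)=\nabla u\,\chi_{\{|u|<k\}}$, which is standard once one checks that the assumptions of $W_{\mu_N}(\R^N)$ force $u\in W^{1,1}_{\mathrm{loc}}(\R^N)$ (this follows from $|\nabla u|\in L^N(dx)\hookrightarrow L^1_{\mathrm{loc}}(dx)$ together with $u\in L^1_{\mathrm{loc}}(dx)$, which in turn is implied by $u\in L^1(d\mu_N)$ since $\mu_N$ is bounded below on compact sets). The rest is three applications of dominated convergence with the natural majorants provided by the definition of the norm $\|\cdot\|_{\mu_N}$.
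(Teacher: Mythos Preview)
Your proof is correct and follows essentially the same strategy as the paper: truncate $u$ at level $k$ and use dominated convergence for each of the three pieces of the norm, with the natural majorants $|u|$, $|\nabla u|^N$, and $|\nabla u|^2|\nabla v_N|^{N-2}$. Your version is in fact a bit cleaner, since you use directly that $\chi_{\{|u|\ge k\}}\to 0$ a.e.\ (because $u$ is finite a.e.), whereas the paper passes through convergence in measure and extracts a subsequence before applying dominated convergence to the gradient terms.
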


\begin{proof}

Let $ u \in W_{\mu_N}(\mathbb{R}^N).$ Taking into account \cite[Lemma A.4, Chapter 2]{KS}, we have $|u| \in  W_{\mu_N}(\mathbb{R}^N)$ with
\begin{equation*}
\nabla |u|=
\begin{cases}
 \nabla u  & \text{ a.e. in } \{ x \in \mathbb{R}^N  \; | \; u(x)>0\},\\
 0         & \text{ a.e. in } \{ x \in \mathbb{R}^N  \; | \; u(x)=0\},\\
 -\nabla u & \text{ a.e. in } \{ x \in \mathbb{R}^N  \; | \; u(x)<0\}.
\end{cases}
\end{equation*}

\noindent
For any $\lambda > 0$, we can define the \textit{truncated function of $u$ of parameter $\lambda$} as $\displaystyle u_{\lambda}:=\max\{-\lambda,\min\{u,\lambda\}\},$ that is

\begin{equation*}
u_{\lambda}(x)=
\begin{cases}
   \lambda         & \text{ a.e. in } \{ x \in \mathbb{R}^N  \; | \; u(x)\geq  \lambda \},\\
    u              & \text{ a.e. in } \{ x \in \mathbb{R}^N  \; | \; - \lambda < u(x) < \lambda \},\\
 - \lambda         & \text{ a.e. in } \{ x \in \mathbb{R}^N  \; | \; u(x) \leq - \lambda \}.
\end{cases}
\end{equation*}

\noindent
Let $k \in \mathbb{N},$ $k \geq 1,$ and let $\{u_k\}_{k \in \mathbb{N}}$ a sequence of truncated function of $u$ of parameter $k$. Clearly,  $\{u_k\}_{k \in \mathbb{N}} \subset W_{\mu_N}(\mathbb{R}^N) \cap L^{\infty}(\mathbb{R}^N) $.\\
Let us denote with $\displaystyle E_k:=\{ x \in \mathbb{R}^N \; | \; |u(x)| \geq k \}.$\\
Hence 
$$ \displaystyle \int_{\mathbb{R}^N} |u - u_k| \, d \mu_N = \int_{E_k} |u - u_k | \, d \mu_N \leq \int_{E_k} |u| \, d \mu_N. $$ 

\noindent
Since $u \in L^1(\mathbb{R}^N, d \mu_N),$ by Chebyshev's inequality, we know that 
$$|E_k|_{\mu_N} \to 0 \qquad \text{as } k \to + \infty.$$
Moreover, we can apply absolute continuity of the integral and say that 
$$ \displaystyle \int_{E_k} |u| \, d \mu_N \to 0 \qquad \text{ as } k \to + \infty.$$

\noindent
Therefore, we deduce $u_k \to u$ in $L^1(\mathbb{R}^N, d\mu_N)$.

\medskip
\noindent
Let now $\varepsilon > 0$ and let us consider the set

$$ \displaystyle \left\{ x \in \mathbb{R}^N \; : \; \left| \frac{ \partial u_k}{ \partial x_i}(x) - \frac{ \partial u}{ \partial x_i}(x)   \right| \geq \varepsilon  \right\}. $$ 

\noindent
Since

$$ \displaystyle \left\{ x \in \mathbb{R}^N \; : \; \left| \frac{ \partial u_k}{ \partial x_i}(x) - \frac{ \partial u}{ \partial x_i}(x)   \right| \geq \varepsilon  \right\} \subset E_k, $$ 

\noindent
we deduce that 

$$ \displaystyle \frac{ \partial u_k}{ \partial x_i} \to \frac{ \partial u}{ \partial x_i} \quad \text{ in measure with respect to $\mu_N$.}$$

\noindent
Hence there exists a subsequence $\{u_{k_j}\}_{j \in \mathbb{N}}$ such that $ \nabla u_{k_j} \to \nabla u $ almost everywhere in $\mathbb{R}^N$ with respect to the measure $\mu_N$. Since the Lebesgue measure $dx$ is absolutely continuous with respect to the measure $d\mu_N$, we deduce that
 $ \nabla u_{k_j} \to \nabla u $ almost everywhere in $\mathbb{R}^N$ also with respect to the Lebesgue measure.

\noindent
Finally, since 

$$ \displaystyle \left| \nabla u(x) - \nabla u_{k_j}(x) \right|^N \leq \left| \nabla u(x) \right|^N \quad \text{a.e. } x \in \mathbb{R}^N,$$

and

$$ \displaystyle  |\nabla u(x)-  \nabla u_{k_j}(x) |^2 |\nabla v_N(x)|^{N-2} \leq |\nabla u(x) |^2 |\nabla v_N(x)|^{N-2} \quad \text{a.e. } x \in \mathbb{R}^N,$$

\noindent
we can apply dominated convergence theorem to say that

$$ \lVert \nabla u - \nabla u_{k_j} \rVert_N \to 0 \quad \text{ as } j \to + \infty,$$

and

$$ \displaystyle \left( \int_{\mathbb{R}^N}  |\nabla u-  \nabla u_{k_j} |^2 |\nabla v_N|^{N-2} \, dx \right)^{\frac{1}{2}} \to 0 \quad \text{ as } j \to + \infty.$$

\end{proof}

\noindent
Taking into account Lemma \ref{funzionilimitate} and Lemma \ref{convergenzatroncate}, we have proved Theorem \ref{densita}, that is 
$$ \displaystyle W_{\mu_N}(\mathbb{R}^N)=\mathcal{X}=\overline{C_0^{\infty} (\mathbb{R}^N)}^{\lVert \cdot \rVert_{\mu_N}}.$$

\medskip
Now, we can extend the $N$-dimensional Euclidean Onofri inequality to the weighted Sobolev space $W_{\mu_N}(\mathbb{R}^N)$.

\begin{proof}[Proof of Theorem \ref{EuclideanOnofriNdim}]

\noindent
Let $u \in W_{\mu_N}(\mathbb{R}^N)$. Since $W_{\mu_N}(\mathbb{R}^N)= \overline{C_0^{\infty} (\mathbb{R}^N)}^{\lVert \cdot \rVert_{\mu_N}},$ there exists a sequence $\displaystyle \{u_m\}_{m \in \mathbb{N}} \subset C_0^{\infty}(\mathbb{R}^N)$ such that $\displaystyle \lVert u-u_m \rVert_{\mu_N} \to 0.$ By \eqref{EuclideanOnofriNdimensional}, we have
$$ \displaystyle \ln \left( \int_{\mathbb{R}^N} e^{u_m} \, d\mu_N \right) \leq  \frac{1}{\widetilde{\omega_N}} \int_{\mathbb{R}^N} H_N(u_m,\mu_N) \, dx + \int_{\mathbb{R}^N} u_m \, d\mu_N $$
for any  $ m \in \mathbb{N}$.\\
The condition $\displaystyle \lVert u-u_m \rVert_{\mu_N} \to 0$ implies that there exists a subsequence, that we recall $\{u_m\}_{m \in \mathbb{N}}$, and a nonnegative function 
$w \in L^N(\mathbb{R}^N,dx)$ with $w^2 |\nabla v_N|^{N-2} \in L^1(\mathbb{R}^N,dx)$, such that 
\begin{itemize}
\item[•] $u_m(x) \to u(x)$ a.e. in $\mathbb{R}^N$ as $m \to + \infty$;
\item[•] $\nabla u_m(x) \to \nabla u(x)$ a.e. in $\mathbb{R}^N$ as $m \to + \infty$;
\item[•] for any m, $|\nabla u_m(x) | \leq w(x)$ a.e. in $\mathbb{R}^N$. 
\end{itemize}
We easily obtain that
$$ \displaystyle \int_{\mathbb{R}^N} u_m \, d\mu_N  \to \int_{\mathbb{R}^N} u \, d\mu_N \quad \text{as } m \to + \infty.$$
It's easy to observe that
$$ \displaystyle H_N(u_m(x),\mu_N(x)) \to H_N(u(x),\mu_N(x)) \quad \text{ a.e. in } \mathbb{R}^N  \quad \text{ as   } \; m \to + \infty.$$
Taking into account Lemma \ref{stima}, we have
\begin{align*}
\displaystyle
  0 \leq H_N(u_m(x),\mu_N(x)) 
& \leq \frac{c_N}{2} \left( |\nabla u_m(x) |^N + |\nabla u_m(x) |^2|\nabla v_N(x) |^{N-2}   \right) \\
& \leq \frac{c_N}{2} \left( w(x)^N + w(x)^2|\nabla v_N(x) |^{N-2}   \right), 
\end{align*}
almost everywhere in $\mathbb{R}^N$ and for any $m$.\\
Since $w^N + w^2|\nabla v_N |^{N-2} \in L^1(\mathbb{R}^N,dx)$, we can apply dominated convergence theorem to say that
$$ \displaystyle \int_{\mathbb{R}^N} H_N(u_m,\mu_N) \, dx  \to \int_{\mathbb{R}^N} H_N(u,\mu_N) \, dx .$$
Finally, by Fatou's Lemma, we have 
\begin{equation*}
\displaystyle
\int_{\mathbb{R}^N} e^u \, d\mu_N 
\leq \liminf_{m \to + \infty} \left( \int_{\mathbb{R}^N} e^{u_m} \, d\mu_N \right) 
\leq  e^{\frac{1}{\widetilde{\omega_N}} \int_{\mathbb{R}^N} H_N(u,\mu_N) \, dx + \int_{\mathbb{R}^N} u \, d\mu_N},
\end{equation*}
and passing to logarithm we can conclude that 
$$ \displaystyle \ln \left( \int_{\mathbb{R}^N} e^u \, d\mu_N \right) \leq  \frac{1}{\widetilde{\omega_N}} \int_{\mathbb{R}^N} H_N(u,\mu_N) \, dx + \int_{\mathbb{R}^N} u \, d\mu_N. $$

\end{proof}

\section{Equivalence between Logarithmic Moser-Trudinger inequality and Euclidean Onofri inequality}  

\bigskip
This section is devoted to the proof Theorem \ref{equivalence}, that is to the equivalence between the sharp inequalities  \eqref{OnofriEuclideaEstesa} and \eqref{CarlesonChang}. 

\begin{proof}[Proof of Theorem \ref{equivalence}]
Let $I$ and $J$ be the functionals defined in \eqref{Ifunctional} and \eqref{Jfunctional}.  We need to prove that 
$$
\inf_{W_{\mu_N}(\R^N)} I = \inf_{W^{1,N}_0(B_1)}  J +\sum_{k=1}^{N-1}\frac{1}{k}.
$$
\noindent
We begin with the proof of $ \lq\lq \geq "$.\\
Let $u \in W_{\mu_N}(\mathbb{R}^N)$. Since $W_{\mu_N}(\mathbb{R}^N)= \overline{C_0^{\infty} (\mathbb{R}^N)}^{\lVert \cdot \rVert_{\mu_N}}$, we can assume $u \in  C_0^{\infty}(\mathbb{R}^N)$ and the thesis will follow by density.\\
Let us consider the following cutoff function $\Psi \in C^{\infty}_0(\mathbb{R}^N)$ such that $0 \leq \Psi \leq 1$, $ \Psi \equiv 1$ if $ \displaystyle |x| \leq \frac{1}{2},$ and $\Psi \equiv 0 $ if $\displaystyle  |x| \geq 1$.\\
Let $r>0$ and let us define $\displaystyle \Psi_r(x):=\Psi\left(\frac{x}{r}\right)$, so that $ \Psi_r \equiv 1$ if $ \displaystyle |x| \leq \frac{r}{2},$ and $\Psi_r \equiv 0 $ if $\displaystyle  |x| \geq r.$ Obviously, $\displaystyle \Psi_r(x) \to 1$ as $r \to +\infty,$ for any $ x \in \mathbb{R}^N.$\\
For simplicity of notation, let's say $B_r:=B_r(O);$ moreover, we will denote with $o(1)$ any function that goes to zero as $r \to + \infty.$\\
Using the characteristic function $\chi$, we can write
$$ \displaystyle \int_{B_r} e^{u(x)\Psi_r(x) + v_N(x)}   \, dx =\int_{\mathbb{R}^N} e^{u(x)\Psi_r(x)+ v_N(x)}\chi_{B_r}(x)  \, dx.$$
Now we observe that $|u|$ is bounded by a constant $M_{u}$, so for any $ x \in \mathbb{R}^N$ and for any $r>0,$
$$ \displaystyle \left| e^{u(x)\Psi_r(x) + v_N(x)}\chi_{B_r}(x) \right| \leq e^{M_{u}+v_N(x)} =e^{M_{u}} \mu_N(x) \in L^1(\mathbb{R}^N).$$
Moreover, for any $x \in \mathbb{R}^N$,
$$ \displaystyle  e^{u(x)\Psi_r(x) + v_N(x)}\chi_{B_r}(x) \to e^{u(x) + v_N(x)} \quad  \text{ as } r \to + \infty. $$
Therefore we can apply dominated convergence theorem and say that, as $r \to + \infty$,
$$ \displaystyle  \int_{B_r} e^{u(x)\Psi_r(x) + v_N(x)}   \, dx  \to  \int_{\mathbb{R}^N} e^{u(x)+v_N(x)}\, dx  = \int_{\mathbb{R}^N} e^u \, d\mu_N. $$
We can write
\begin{equation*}
\displaystyle \ln \left( \int_{\mathbb{R}^N} e^u \, d\mu_N \right)= \lim_{r \to + \infty} \ln \left(  \int_{B_r} e^{u(x)\Psi_r(x) + v_N(x)}   \, dx \right),
\end{equation*}
therefore we can estimate
$$\displaystyle  \ln \left(  \int_{B_r} e^{u(x)\Psi_r(x) + v_N(x)}   \, dx \right).$$
In general the function $u\Psi_r + v_N \notin W^{1,N}_0(B_r)$.\\
If we define $w:= u\Psi_r + v_N - v_N(r)$, (where with $v_N(r)$ we make a slight abuse of notation to denote a constant function on $B_r$ whose value is $v_N(x)$ as $|x|=r$), we have $w \in  W^{1,N}_0(B_r)$.
\begin{align*}
\displaystyle 
     \ln \left( \int_{B_r} e^{u(x)\Psi_r(x)} \, d \mu_N   \right)
& =  \ln \left( \int_{B_r} e^{u(x)\Psi_r(x) + v_N(x)}   \, dx   \right) \\
& = N\ln \left( \frac{r}{  1+ r^{\frac{N}{N-1}}  }\right) + \ln \left( \frac{1}{|B_r|} \int_{B_r} e^{w(x)}   \, dx \right) \\
& = -\frac{N}{N-1} \ln r + \ln \left( \frac{1}{|B_r|} \int_{B_r} e^{w(x)}   \, dx \right) + o(1).
\end{align*}
Since $w \in  W^{1,N}_0(B_r)$, we have 
$$ \displaystyle \inf_{W^{1,N}_0(B_1)}  J <  \frac{1}{\widetilde{\omega_N}} \int_{B_r} | \nabla w |^N \, dx - \ln \left( \frac{1}{|B_r|} \int_{B_r} e^{w(x)}   \, dx \right). $$
Therefore
\begin{align*}
\displaystyle 
\ln \left( \int_{B_r} e^{u\Psi_r} \, d \mu_N   \right) < \frac{1}{\widetilde{\omega_N}} \int_{B_r} | \nabla w |^N \, dx  -\frac{N}{N-1} \ln r - \inf_{W^{1,N}_0(B_1)}  J  + o(1). 
\end{align*}
We need to compute $ \displaystyle \frac{1}{\widetilde{\omega_N}} \int_{B_r} | \nabla w |^N \, dx$.\\
Since
$$ \displaystyle \nabla w = \nabla(u \Psi_r) + \nabla v_N,$$
and
\begin{align*}
 \displaystyle
H_N(u \Psi_r, \mu_N) = | \nabla w|^N - |\nabla v_N |^N - N |\nabla v_N |^{N-2} \nabla  v_N  \cdot \nabla(u \Psi_r),
\end{align*}
we have
$$ \displaystyle | \nabla w|^N= H_N(u \Psi_r, \mu_N)  + |\nabla v_N |^N + N |\nabla v_N |^{N-2} \nabla  v_N  \cdot \nabla(u \Psi_r),$$
by which
\begin{align*}
\displaystyle
     \int_{B_r} | \nabla w |^N \, dx 
& =  \int_{B_r} H_N(u \Psi_r, \mu_N) \, dx \\
& +  \int_{B_r} |\nabla v_N |^N \, dx + \int_{B_r} |\nabla v_N |^{N-2} \nabla  v_N  \cdot \nabla(u \Psi_r) \, dx.
\end{align*}

\noindent
By Green's formula and taking into account \eqref{GaussGreen}, we obtain
\begin{align*}
\displaystyle
\frac{N}{\widetilde{\omega_N}} \int_{B_r} |\nabla v_N |^{N-2} \nabla  v_N  \cdot \nabla(u \Psi_r) \, dx 
= \int_{B_r} u \Psi_r  \mu_N \, dx .
\end{align*}

\noindent
Now, we need to compute $\displaystyle \frac{1}{\widetilde{\omega_N}} \int_{B_r} |\nabla v_N |^N \, dx.$
\begin{align}\label{contovn}
\displaystyle
 \frac{1}{\widetilde{\omega_N}} \int_{B_r} |\nabla v_N |^N \, dx 
& = \frac{N^{2 N}}{(N-1)^N \widetilde{\omega_N}}\int_{B_r} \frac{|x|^{\frac{N}{N-1}}}{\left(1+|x|^{\frac{N}{N-1}}\right)^N } \, dx \nonumber \\
& = \int_0^{r^{\frac{N}{N-1}}} \frac{t^{N-1}}{\left( 1+t \right)^N}  \, dt \nonumber \\
& = \int_0^{r^{\frac{N}{N-1}}} \frac{1}{ 1+t}  \, dt - \sum_{k=0}^{N-2} \binom{N-1}{k} \int_0^{r^{\frac{N}{N-1}}} \frac{t^k}{\left( 1+t \right)^N}  \, dt \\
& =  \frac{N}{N-1} \ln r - \sum_{k=0}^{N-2} \binom{N-1}{k} \int_0^{r^{\frac{N}{N-1}}} \frac{t^k}{\left( 1+t \right)^N}  \, dt + o(1). \nonumber
\end{align}

\noindent
By induction, we prove that
\begin{equation}\label{ind}
\displaystyle  \binom{N-1}{k} \int_0^{r^{\frac{N}{N-1}}} \frac{t^k}{\left( 1+t \right)^N}  \, dt= \frac{1}{N-k-1} + o(1)
\end{equation}
for any  $k=0,...,N-2.$

\noindent
If $k=0,$ we have
\begin{align*}
\displaystyle
\binom{N-1}{k} \int_0^{r^{\frac{N}{N-1}}} \frac{t^k}{\left( 1+t \right)^N}  \, dt
 = \int_0^{r^{\frac{N}{N-1}}} \frac{1}{\left( 1+t \right)^N}  \, dt 
 = \frac{1}{N-1} + o(1).
\end{align*}

\noindent
Now, let $N \geq 3,$ $k \geq 1,$  and let assume $\eqref{ind}$ to hold for $k-1.$ 
\begin{align*}
\displaystyle
&   \binom{N-1}{k} \int_0^{r^{\frac{N}{N-1}}} \frac{t^k}{\left( 1+t \right)^N}  \, dt \\
& = \binom{N-1}{k} \left[ o(1) + \frac{k}{N-1} \int_0^{r^{\frac{N}{N-1}}} \frac{t^{k-1}(1+t)}{\left( 1+t \right)^N}  \, dt       \right] \\
& = \binom{N-1}{k} \left[ o(1) + \frac{\frac{k}{N-1}}{\binom{N-1}{k-1}} \left( \frac{1}{N-k} + o(1)  \right)   + \frac{k}{N-1}\int_0^{r^{\frac{N}{N-1}}} \frac{t^k}{\left( 1+t \right)^N}  \, dt      \right] \\
& = \frac{1}{N-1}  +  \frac{k}{N-1} \binom{N-1}{k} \int_0^{r^{\frac{N}{N-1}}} \frac{t^k}{\left( 1+t \right)^N}  \, dt + o(1).
\end{align*}

\noindent
Therefore
$$ \displaystyle  \binom{N-1}{k} \int_0^{r^{\frac{N}{N-1}}} \frac{t^k}{\left( 1+t \right)^N}  \, dt= \frac{1}{N-k-1} + o(1). $$

\medskip
\noindent
By $\eqref{ind}$ we deduce that
\begin{equation}\label{sum}
\displaystyle \sum_{k=0}^{N-2} \binom{N-1}{k} \int_0^{r^{\frac{N}{N-1}}} \frac{t^k}{\left( 1+t \right)^N}  \, dt=\sum_{k=1}^{N-1} \frac{1}{k} + o(1) .
\end{equation}

\noindent
Finally,
\begin{equation}\label{contoGradvnallaN}
\displaystyle \frac{1}{\widetilde{\omega_N}} \int_{B_r} |\nabla v_N |^N \, dx =\frac{N}{N-1} \ln r - \sum_{k=1}^{N-1} \frac{1}{k} + o(1). 
\end{equation}

\noindent
Summing up, we have shown that
\begin{align*}
\displaystyle 
 \inf_{W^{1,N}_0(B_1)}  J + \sum_{k=1}^{N-1} \frac{1}{k} <  I(u \Psi_r) + o(1).
\end{align*}

\noindent
Now, we pass to the limit as $r \to + \infty.$ 

\noindent
We observe that $|u|$ is bounded by a constant $M_{u}$, so for any $ x \in \mathbb{R}^N$ and for any $r>0,$
$$ \displaystyle \left|  u(x) \Psi_r(x)  \mu_N(x) \right| \leq M_{u} \mu_N(x) \in L^1(\mathbb{R}^N).$$
Moreover, for any $x \in \mathbb{R}^N$,
$$ \displaystyle   u(x) \Psi_r(x)  \mu_N(x) \to u(x)  \mu_N(x) \quad  \text{ as } r \to + \infty. $$
We can apply dominated convergence theorem to say that
$$ \displaystyle   \int_{\mathbb{R}^N} u \Psi_r  \, d\mu_N  \to  \int_{\mathbb{R}^N} u  d\mu_N. $$

\noindent
Now we observe that, for any $x \in \mathbb{R}^N$,
$$ \displaystyle H_N(u(x) \Psi_r(x), \mu_N(x))  \to H_N(u(x), \mu_N(x)) \qquad \text{ as } r \to + \infty.$$

\noindent
Moreover, taking into account Lemma \ref{stima} and $u \in  C_0^{\infty}(\mathbb{R}^N)$, there exists a function $v \in  L^1(\mathbb{R}^N, dx)$ such that, for any $ x \in \mathbb{R}^N$ and for any $r>1$, we have 
\begin{align*}
\displaystyle
        0 \leq  H_N(u(x) \Psi_r(x), \mu_N(x)) \leq v(x).
\end{align*}

\noindent
Therefore we can apply dominated convergence theorem and say that
$$ \displaystyle \frac{1}{\widetilde{\omega_N}} \int_{\mathbb{R}^N} H_N(u \Psi_r, \mu_N) \, dx  \to \frac{1}{\widetilde{\omega_N}} \int_{\mathbb{R}^N} H_N(u, \mu_N) \, dx .$$

\noindent
Finally,
\begin{align*}
\displaystyle 
 \inf_{W^{1,N}_0(B_1)}  J + \sum_{k=1}^{N-1} \frac{1}{k} \leq  I(u),
\end{align*}

\noindent
by which
$$
 \inf_{W^{1,N}_0(B_1)}  J +\sum_{k=1}^{N-1}\frac{1}{k} \leq \inf_{C_0^{\infty}(\R^N)} I.
$$

\noindent
By density, that is by the proof of Theorem \ref{EuclideanOnofriNdim}, we obtain
$$
 \inf_{W^{1,N}_0(B_1)}  J +\sum_{k=1}^{N-1}\frac{1}{k} \leq \inf_{W_{\mu_N}(\R^N)} I.
$$

\noindent
Now, we prove $ \lq\lq\leq "$\\
Let $u  \in W^{1,N}_0(B_1)$ and let $r>0$. We define the function $u_r: \mathbb{R}^N \to \mathbb{R}$ as 
$$ \displaystyle 
u_r(x)=
\begin{cases}
\displaystyle u \left( \frac{x}{r} \right) - v_N(x)+v_N(r) & \text{if $|x| \leq r$,} \bigskip \\
0 & \text{if $|x| > r$.}
\end{cases}
$$
By Lemma \ref{W1nmun}, we have $u_r \in W_{\mu_N}(\mathbb{R}^N)$, by which
$$ \displaystyle  \inf_{W_{\mu_N}(\R^N)} I \leq I(u_r),
$$
where
\begin{equation}\label{calcolo}
\displaystyle I(u_r) = \frac{1}{\widetilde{\omega_N}} \int_{\mathbb{R}^N} H_N(u_r,\mu_N) \, dx + \int_{\mathbb{R}^N} u_r \, d\mu_N - \ln \left( \int_{\mathbb{R}^N} e^{u_r} \, d\mu_N \right).
\end{equation}
We need to compute the terms in \eqref{calcolo}.
$$
\displaystyle 
\ln \left( \int_{\mathbb{R}^N} e^{u_r} \, d\mu_N \right) = \ln \left( \mu_N(r) \int_{B_r} e^{u \left( \frac{x}{r} \right)} \, dx + \int_{\mathbb{R}^N \setminus B_r}  \mu_N(x) \, dx \right);
$$

\begin{align*}
\displaystyle
   \mu_N(r) \int_{B_r} e^{u \left( \frac{x}{r} \right)} \, dx  = \left( \frac{r}{1+r^{\frac{N}{N-1}}}  \right)^N \frac{1}{V_N} \int_{B_1} e^{u(y)}  \, dy;
\end{align*}

\begin{align*}
\displaystyle
    \int_{\mathbb{R}^N \setminus B_r}  \mu_N(x) \, dx = \left[ \left( \frac{t}{1+t} \right)^{N-1} \right]_{r^{\frac{N}{N-1}}}^{+ \infty}  = 1 - \frac{r^N}{\left( 1+r^{\frac{N}{N-1}}  \right)^{N-1}}.
\end{align*}

\noindent
Therefore
\begin{align*}
\displaystyle
&  \ln \left( \int_{\mathbb{R}^N} e^{u_r} \, d\mu_N \right) \\
& = \ln \left( \left( \frac{r}{1+r^{\frac{N}{N-1}}}  \right)^N \frac{1}{V_N} \int_{B_1} e^{u}  \, dx + 1 - \frac{r^N}{\left( 1+r^{\frac{N}{N-1}}  \right)^{N-1}} \right) \\
& = -\frac{N}{N-1} \ln r + \ln \left( \frac{1}{V_N} \int_{B_1} e^{u}  \, dx + N-1 + o(1) \right) + o(1).\\
\end{align*}

\noindent
Now, we compute $ \displaystyle \frac{1}{\widetilde{\omega_N}} \int_{\mathbb{R}^N} H_N(u_r,\mu_N) \, dx.$\\
We have
\begin{align*}
\displaystyle 
    \frac{1}{\widetilde{\omega_N}} \int_{\mathbb{R}^N} H_N(u_r,\mu_N) \, dx 
& = \frac{1}{\widetilde{\omega_N}} \int_{B_1} |\nabla u |^N \, dx + \frac{N-1}{\widetilde{\omega_N}} \int_{B_r} |\nabla v_N|^N \, dx \\
& - \frac{N}{ \widetilde{\omega_N}} \int_{B_r} |\nabla  v_N |^{N-2} \nabla  v_N \cdot \nabla_x u \left( \frac{x}{r} \right) \, dx. \\
\end{align*}

\noindent
Taking into account \eqref{contoGradvnallaN}, we have 
$$ \displaystyle \frac{N-1}{\widetilde{\omega_N}} \int_{B_r} |\nabla v_N|^N \, dx= N \ln r - (N-1)\sum_{k=1}^{N-1} \frac{1}{k}  + o(1). $$

\noindent
Moreover, by Green's formula and taking into account \eqref{GaussGreen}, we obtain
\begin{align*}
\displaystyle
   - \frac{N}{ \widetilde{\omega_N}} \int_{B_r} |\nabla  v_N |^{N-2} \nabla  v_N \cdot \nabla_x u \left( \frac{x}{r} \right) \, dx  = -\int_{B_r} u \left( \frac{x}{r} \right)  \, d\mu_N  .\\
\end{align*}

\noindent
Therefore 
\begin{align*}
\displaystyle  
&   \frac{1}{\widetilde{\omega_N}} \int_{\mathbb{R}^N} H_N(u_r,\mu_N) \, dx \\
& = \frac{1}{\widetilde{\omega_N}} \int_{B_1} |\nabla u|^N dx -\int_{B_r} u \left( \frac{x}{r} \right)  \, d\mu_N  + N \ln r - (N-1)\sum_{k=1}^{N-1} \frac{1}{k}  + o(1).
\end{align*}

\noindent
Now, we compute $ \displaystyle \int_{\mathbb{R}^N} u_r \, d\mu_N $.

\begin{align*}
\displaystyle
  \int_{\mathbb{R}^N} u_r \, d\mu_N  = \int_{B_r} u \left( \frac{x}{r} \right) \, d\mu_N  - \int_{B_r} v_N(x) \, d\mu_N  + v_N(r) \int_{B_r} \,  d\mu_N . 
\end{align*}

\begin{align*}
\displaystyle
    v_N(r) \int_{B_r}  \,  d\mu_N   = - \ln V_N - \frac{N^2}{N-1} \ln r + o(1). 
\end{align*}

\begin{align*}
\displaystyle
  - \int_{B_r}  v_N(x) \, d\mu_N  
 = \ln V_N + N^2 \int_0^r \frac{ \rho^{N-1} \ln \left( 1 + \rho^{\frac{N}{N-1}} \right)}{\left( 1 + \rho^{\frac{N}{N-1}}  \right)^N} \, d \rho + o(1).
\end{align*}

\noindent
Taking into account \eqref{contovn} and \eqref{sum}, we have 
\begin{align*}
\displaystyle
&   N^2 \int_0^r \frac{ \rho^{N-1} \ln \left( 1 + \rho^{\frac{N}{N-1}} \right)}{\left( 1 + \rho^{\frac{N}{N-1}}  \right)^N} \, d \rho \\
& = N(N-1) \int_0^{r^{\frac{N}{N-1}}} \frac{1}{\left( 1+t \right)^2} \left( \frac{t}{1+t}  \right)^{N-2} \ln \left( 1+t \right) \, dt \\
& = N\left(   \left[ \left( \frac{t}{1+t}  \right)^{N-1} \ln \left( 1+t \right) \right]_0^{r^{\frac{N}{N-1}}} -  \int_0^{r^{\frac{N}{N-1}}}  \frac{t^{N-1}}{\left( 1+t \right)^N} \, dt   \right) \\
& = N \left( \frac{ r^N- \left( 1 + r^{\frac{N}{N-1}}  \right)^{N-1}    }{\left(1+r^{\frac{N}{N-1}}  \right)^{N-1}} \ln \left( 1+r^{\frac{N}{N-1}} \right)   +     \sum_{k=1}^{N-1} \frac{1}{k} + o(1)  \right) \\
& = N \sum_{k=1}^{N-1} \frac{1}{k} + o(1).\\
\end{align*}

\noindent
Therefore
$$ \displaystyle  \int_{\mathbb{R}^N} u_r \, d\mu_N = \int_{B_r} u \left( \frac{x}{r} \right) \,  d\mu_N  - \frac{N^2}{N-1} \ln r + N \sum_{k=1}^{N-1} \frac{1}{k} + o(1). $$ 

\noindent
Summing up, we have shown that 
\begin{align*}
\displaystyle 
& \inf_{W_{\mu_N}(\R^N)} I  \\
& \leq \frac{1}{\widetilde{\omega_N}} \int_{B_1} |\nabla u|^N  dx + \sum_{k=1}^{N-1} \frac{1}{k} - \ln \left( \frac{1}{V_N} \int_{B_1} e^{u}   dx + N-1 + o(1) \right) + o(1). 
\end{align*} 

\noindent
Passing to the limit as $r \to + \infty$, we obtain 
$$ \displaystyle \inf_{W_{\mu_N}(\R^N)} I  \leq \frac{1}{\widetilde{\omega_N}} \int_{B_1} |\nabla u|^N \, dx + \sum_{k=1}^{N-1} \frac{1}{k} - \ln \left( \frac{1}{V_N} \int_{B_1} e^{u}  \, dx + N-1 \right), $$

\noindent
by which 
$$ \displaystyle \inf_{W_{\mu_N}(\R^N)} I  < J(u) + \sum_{k=1}^{N-1} \frac{1}{k}.$$

\noindent
Finally,
$$ \displaystyle \inf_{W_{\mu_N}(\R^N)} I  \leq  \inf_{W^{1,N}_0(B_1)}  J + \sum_{k=1}^{N-1} \frac{1}{k}.$$
\end{proof}

\noindent
{\bf Acknowledgments} \
The authors are supported by INdAM-GNAMPA and 
they thank PNRR MUR project CN00000013 HUB - National Centre for HPC, Big Data and Quantum Computing
(CUP H93C22000450007). \\
The first and third authors acknowledge financial
support from PRIN PNRR  P2022YFAJH {\sl \lq\lq Linear and Nonlinear PDEs: New directions and applications''} (CUP H53D23008950001). \\
The third author is supported by the INdAM-GNAMPA Projects \lq \lq Fenomeni di concentrazione in PDEs non locali" (CUP E53C22001930001) and \lq \lq Problemi di doppia curvatura su varietà a bordo e legami con le EDP di tipo ellittico'' (CUP E53C23001670001).

\bigskip
\noindent
{\bf Availability of data and material} \ Not applicable.

\end{document}